%Last Change: Thu 25 Jun 2009 10:35:02 PM ART
\documentclass[11pt]{amsart} \textheight 22cm

\usepackage{hhline}
\usepackage{amssymb}
\usepackage[dvips, dvipsnames, usenames]{color}
\usepackage{eucal}

\newcommand{\fd}{finite-dimensional}

\newcommand{\Ind}{\operatorname{Ind}}
\newcommand{\Imm}{\operatorname{Im}}

\newcommand{\supp}{\operatorname{supp}}

\newcommand{\oct}{\mathfrak O}
\newcommand\sigmae{\sigma_{e}}
\newcommand\sigmao{\sigma_{o}}
\newcommand\so{\sigmao}

\newcommand{\fp}{{\mathbb F}_p}

\newcommand\toba{{\mathfrak B }}

\newcommand{\trid}{\triangleright}

\newcommand{\R}{{\mathcal R}}

\newcommand{\ku}{\mathbb C}

\newcommand{\Z}{{\mathbb Z}}
\newcommand{\N}{{\mathbb N}}

\newcommand{\Q}{{\mathsf Q}}
\newcommand{\F}{{\mathbb F}}

\newcommand{\D}{{\mathcal D}}

\newcommand{\q}{{\mathbf q}}

\newcommand{\Ee}{{\mathcal E}}

\newcommand{\GL}{\mathbf{GL}}

\newcommand{\n}{\mathbf{n}}

\newcommand{\Oc}{{\mathcal O}}
\newcommand{\oc}{{\mathcal O}}

\newcommand{\ydg}{{}^{\ku G}_{\ku G}\mathcal{YD}}
\newcommand{\ydk}{{}^{\ku K}_{\ku K}\mathcal{YD}}

\newcommand{\Aut}{\operatorname{Aut}}
\newcommand{\Out}{\operatorname{Out}}

\newcommand\card{\operatorname{card}}

\newcommand\sgn{\operatorname{sgn}}

%\partial_0}
%\partial_1}
%\varepsilon_0}
%\varepsilon_1}

%\numberwithin{equation}{section}
\theoremstyle{plain}

\newtheorem{lema}{Lemma}[section]
\newtheorem{theorem}[lema]{Theorem}

\newtheorem{prop}[lema]{Proposition}
\newtheorem{claim}{Claim}
\newtheorem{question}{Question}
\newtheorem{step}{Step}

\theoremstyle{definition}
\newtheorem{definition}[lema]{Definition}
\newtheorem{exa}[lema]{Example}

\theoremstyle{remark}
\newtheorem{obs}[lema]{Remark}

\newcommand\id{\operatorname{id}}

\newcommand\st{\mathbb S_3}
\newcommand\sk{\mathbb S_4}
\newcommand\sco{\mathbb S_5}

\newcommand\am{\mathbb A_m}

\newcommand\ac{\mathbb A_4}

\newcommand\as{\mathbb A_6}
\newcommand\A{\mathbb A}

\newcommand\sm{\mathbb S_m}

\newcommand\soc{\mathbb S_8}

\newcommand\sei{\mathbb S_6}
\newcommand\sst{\mathbb S_7}

\newcommand\s{\mathbb S}

\def\pf{\begin{proof}}
\def\epf{\end{proof}}

\theoremstyle{remark}

\begin{document}

\renewcommand{\baselinestretch}{1.2}

\thispagestyle{empty}
%\vspace*{2in}

\title[pointed Hopf algebras over $\am$ and $\sm$]
{Finite-dimensional pointed Hopf algebras with alternating groups
are trivial}

\author[Andruskiewitsch, Fantino, Gra\~na, Vendramin]{N. Andruskiewitsch,
F. Fantino, M. Gra\~na, L. Vendramin}
%{Nicol\'as Andruskiewitsch, Fernando Fantino, Mat\'{\i}as Gra\~na,
%Leandro Vendramin}

\thanks{This work was partially supported by ANPCyT-Foncyt, CONICET, Ministerio de Ciencia y
Tecnolog\'{\i}a (C\'ordoba)  and Secyt (UNC)}

\address{\noindent N. A., F. F. : Facultad de Matem\'atica, Astronom\'{\i}a y F\'{\i}sica,
Universidad Nacional de C\'ordoba. CIEM -- CONICET. %\newline \noindent
Medina Allende s/n (5000) Ciudad Universitaria, C\'ordoba,
Argentina}
\address{\noindent M. G., L. V. : Departamento de Matem\'atica -- FCEyN,
Universidad de Buenos Aires, Pab. I -- Ciudad Universitaria (1428)
Buenos Aires -- Argentina}
\address{\noindent L. V. : Instituto de Ciencias, Universidad de Gral. Sarmiento, J.M. Gutierrez
1150, Los Polvorines (1653), Buenos Aires -- Argentina  }

\address{}

\email{(andrus, fantino)@famaf.unc.edu.ar} \email{(matiasg,
lvendramin)@dm.uba.ar}

\subjclass[2000]{16W30; 17B37}
\date{\today}

\begin{abstract}
It is  shown that Nichols algebras over alternating groups $\am$
($m\ge 5$) are infinite dimensional.  This proves that any complex
finite dimensional pointed Hopf algebra with group of group-likes
isomorphic to $\am$ is isomorphic to the group algebra.  In a
similar fashion, it is shown that the Nichols algebras over the
symmetric groups $\sm$ are all infinite-dimensional, except maybe
those related to the transpositions considered in \cite{FK}, and
the class of type $(2,3)$ in $\sco$. We also show that any simple
rack $X$ arising from a symmetric group, with the exception of a
small list, collapse, in the sense that the Nichols algebra
$\toba(X, \q)$ is infinite dimensional, $\q$ an arbitrary cocycle.
\end{abstract}
\maketitle

\section{Introduction}\label {0}
\subsection{}
%Let $\ku$ be the field  of complex numbers.
In the early 90's, S. Montgomery raised the question of finding a
\emph{non-trivial} finite-dimensional complex pointed Hopf algebra
$H$ with non-abelian group $G$; here ``non-trivial" means that $H$
is neither a group algebra, nor is  cooked out of a pointed Hopf
algebra with abelian group by some kind of extension. This
question was addressed by A. Milinski and H.-J. Schneider around
1995, who produced two examples, one with $G = \st$, another with
$G = \sk$. The main point was to check that a quadratic algebra
$\toba_m$ built from the conjugacy class of transpositions in
$\sm$ is finite-dimensional. They were able to do it for $m=3,4$
using Gr\"obner bases.  These results were published later in
\cite{MS}. Independently, S. Fomin and K.  N. Kirillov considered
closely related quadratic algebras $\Ee_m$, also constructed from
the transpositions in $\sm$, and they determined the dimensions of
$\Ee_3$, $\Ee_4$ and $\Ee_5$ \cite{FK}.  With the introduction of
the Lifting method, see \cite{AS-cambr}, it became clear that
$\toba_m$ and $\Ee_m$ should be Nichols algebras. This was indeed
checked in \cite{MS} for $m=3,4$ and by the third named author for
$m=5$ \cite{G2}.

\medbreak\subsection{} In this paper, we work over the field $\ku$
of complex numbers. If $G$ denotes a finite group, to classify all
complex pointed Hopf algebras $H$ with group of group-likes
$G(H)\simeq G$ and $\dim H<\infty$, we need to determine the
irreducible Yetter-Drinfeld modules over $\ku G$ such that the
dimension of the corresponding Nichols algebras is finite. In
other words, recalling that irreducible Yetter-Drinfeld modules
are parameterized by pairs $(\Oc, \rho)$ -- $\Oc$ a conjugacy
class of $G$, $\sigma\in \Oc$ fixed, $\rho$ an irreducible
representation of the centralizer $C_{G}(\sigma)$ -- and denoting
by $\toba(\Oc, \rho)$ the associated Nichols algebra, we need to
know for which pairs $(\Oc, \rho)$ is $\dim\toba(\Oc, \rho) <
\infty$.  Assume that $G = \sm$. Then $\toba_m$ and $\Ee_m$
correspond to $\Oc = \Oc_2^m$, the conjugacy class of the
transpositions, and $\rho$ the one-dimensional representations of
the centralizer $\simeq \s_{m-2}\times \s_2$,
$\rho=\epsilon\otimes\sgn$ or $\rho=\sgn\otimes\sgn$ respectively.
If $m\geq 6$, it is still open whether:
\begin{itemize}
    \item $\toba_m$ and $\Ee_m$ are Nichols algebras,
    \item the dimensions of $\toba_m$ and $\Ee_m$ are finite,
    \item the dimensions of $\toba(\Oc_2^m, \sgn^j\otimes \sgn)$, $j=1,2$ are
    finite.
\end{itemize}

\medbreak\subsection{} Recently, there was some progress on
pointed Hopf algebras over $\sm$:
\begin{itemize}
    \item The classification of the finite-dimensional Nichols algebras over
        $\st$, resp. $\sk$, is concluded in \cite{AHS}.
    \item The classification of finite-dimensional pointed Hopf algebras with
        group $\st$, resp.  $\sk$, is concluded in \cite{AHS}, resp. \cite{GG}.
    \item Most of the Nichols algebras $\toba(\Oc, \rho)$ over $\sm$ have infinite
        dimension, with the exception of a short list of open possibilities \cite{AF1, afz}.
\end{itemize}

%no es usa en otro lado
%Let $\G_n$ be the group of $n$-th roots of 1 in $\ku$.

Our first main result adjusts drastically the list given in \cite[Th. 1]{afz}. See \S\ref{subsect:sm} for the
unexplained notation.

\begin{theorem}\label{th:sm-intro}
    Let $m\ge 5$.  Let $\sigma\in \mathbb S_m$ be of type $(1^{n_1},2^{n_2},\dots,m^{n_m})$,
    let $\Oc$ be the conjugacy class of $\sigma$ and let $\rho=(\rho,V) \in
    \widehat{C_{\mathbb S_m}(\sigma)}$. If $\dim\toba(\Oc, \rho) < \infty$, then
    %esto es automatico por la lista dada
    %$q_{\sigma\sigma}=-1$ and one of the following holds:
    the type of $\sigma$ and $\rho$ are in the following list:
    \renewcommand{\theenumi}{\roman{enumi}}\renewcommand{\labelenumi}{(\theenumi)}
    \begin{enumerate}
        \item\label{it:caso2}
            $(1^{n_1}, 2)$, $\rho_1 =\sgn$ or $\epsilon$, $\rho_2 =\sgn$.
        \item
            $(2, 3)$ in $\sco$, $\rho_2 =\sgn$, $\rho_3= \overrightarrow{\chi_{0}}$.
        \item\label{it:caso222}
            $(2^3)$ in $\sei$, $\rho_2=\overrightarrow{\chi_{1}}\otimes\epsilon$ or
            $\overrightarrow{\chi_{1}}\otimes \sgn$.
    \end{enumerate}
\end{theorem}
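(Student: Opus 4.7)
The plan is to argue by elimination starting from the list of open pairs $(\Oc,\rho)$ already singled out in \cite[Theorem 1]{afz}, and to discard every entry that does not appear among the three items of the theorem. Two standard criteria drive the eliminations. First, if $\Oc$ contains a subrack $Y$ of type D, then $\toba(\Oc, \rho)$ is infinite-dimensional for \emph{every} $\rho$, so the whole class is killed uniformly. Second, if $\sigma, \tau \in \Oc$ commute, then $\{\sigma, \tau\}$ spans an abelian subrack whose Nichols algebra with the restricted braiding embeds into $\toba(\Oc, \rho)$; computing the $2\times 2$ diagonal matrix of braiding eigenvalues and checking it against Heckenberger's list of finite-dimensional Nichols algebras of diagonal type yields an obstruction: a braiding outside that list forces $\dim\toba(\Oc,\rho) = \infty$.

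\textbf{Step 1 (type-D elimination).} Partition the remaining open cases from \cite{afz} into those whose conjugacy class is of type D and those that are not. For each class of type D I would write down an explicit block decomposition $\sigma = \sigma_1 \sigma_2$ with $\sigma_1, \sigma_2$ of disjoint supports, together with an element $g \in \sm$ swapping the supports in the way that realizes the commutator condition in the definition of type D. This handles in one stroke most bulky cycle types -- typically those with some $n_j \ge 3$, or with several non-trivial parts -- and does so independently of $\rho$.

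\textbf{Step 2 (abelian-subrack elimination).} The thin cases that survive Step 1 are not of type D and must be attacked through $\rho$. The centralizer $C_{\sm}(\sigma) \simeq \prod_j \s_{n_j} \wr \Z/j$ has irreducible representations parametrized by tuples of bi-partitions. For each such $\rho$ I would select commuting pairs $\sigma, \tau \in \Oc$, restrict $\rho$ to $\langle \sigma, \tau\rangle$ and further to the joint centralizer by Mackey, and read off the eigenvalues that enter the diagonal braiding. Running over enough commuting pairs inside $\Oc$, the only $\rho$ for which every resulting diagonal matrix sits inside Heckenberger's list are exactly the representations that appear in the conclusion of the theorem.

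The main obstacle is Step 2. The centralizer has wreath-product structure, so the irreducible representations require detailed character-theoretic input, and the choice of commuting pair matters: a given $\rho$ may pass some tests and fail others, so several pairs must be examined in parallel. A second tension is that Step 1 must not be too greedy: the sporadic entries $(2,3)$ in $\sco$ and $(2^3)$ in $\sei$ survive precisely because the commuting pairs available to them produce diagonal braidings on the borderline of Heckenberger's list, and any looser type-D argument would kill them by mistake. The final list \ref{it:caso2}--\ref{it:caso222} reflects the exact balance between these two constraints.
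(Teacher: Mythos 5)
Your two-pronged architecture --- a $\rho$-independent type-D elimination followed by abelian-subrack/diagonal-type arguments for the survivors --- is exactly the architecture of the paper, which first proves Theorem \ref{th:racks-sn-liquidados} (most conjugacy classes in $\sm$ or $\am$ are of type D, hence collapse) and then discards the residual classes by invoking \cite[Th. 1]{AZ}, \cite[Th.~1~(B)~(i)]{AF1} and the representation-theoretic restrictions of \cite{afz}. The genuine gap is in how you propose to certify type D. Writing $\sigma=\sigma_1\sigma_2$ as a product of blocks with disjoint supports and exhibiting a $g$ that swaps the supports does not produce what Definition \ref{def:tipod} requires, namely a decomposable subrack $Y=R\coprod S$ \emph{of the conjugacy class} $\Oc$ with $r\trid(s\trid(r\trid s))\neq s$. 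The paper's actual devices are: (i) Proposition \ref{pr:lamejortec} --- find $\sigma_1,\sigma_2\in\Oc$ with $(\sigma_1\sigma_2)^2\neq(\sigma_2\sigma_1)^2$ that are \emph{not conjugate in the subgroup $H$ they generate}, so that $R=\oc_{\sigma_1}^H$, $S=\oc_{\sigma_2}^H$ give the decomposition; (ii) embeddings of affine double racks $\Q_{A,T}^{[1,j]}$ into quasi-real classes; (iii) reduction by juxtaposition. Each base case (even $m$-cycles, odd non-prime-power $m$-cycles, $(2,j)$, $(1,4)$, $(2,4)$, $(2,3^2)$, the entries of Table \ref{tab:classes-d-sym}, \dots) needs its own ad hoc construction, and none has the ``support-swapping'' form you describe.

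Your heuristic for what Step 1 kills (``some $n_j\ge 3$, or several non-trivial parts'') is also wrong in both directions: $(2^3)$ has $n_2=3$ and $(2,3)$ has two non-trivial parts, yet neither is of type D and both survive to the final list, whereas $(1,4)$ and $(3^3)$ are of type D. Locating this boundary exactly is the content of the multi-step proof of Theorem \ref{th:racks-sn-liquidados}; note moreover that the classes $(p)$ and $(1,p)$, $p$ prime, are not settled by type D at all (the paper cannot decide this in general) and must instead be removed by \cite[Th. 1]{AZ}. Your Step 2 is the right idea in outline --- it is essentially what the cited references carry out --- but as stated it defers all of the actual computation, and two details are off: rank-2 diagonal braidings do not always suffice (Lemma \ref{lem:triangulitos} needs a rank-3 abelian subrack), and the centralizer components are $(\Z/j)^{n_j}\rtimes\s_{n_j}$, whose irreducibles are induced from characters of $(\Z/j)^{n_j}$ as in \eqref{formrho2}, not parametrized by tuples of bipartitions.
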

Actually, the rack $\oc_2$ in $\s_6$ is isomorphic to $\oc_{2^3}$,
since any map in the class of the outer automorphism of $\s_6$
applies $(1\;2)$ in $(1\;2)(3\;4)(5\;6)$ \cite{JR}. Thus, case
\eqref{it:caso222} is contained in \eqref{it:caso2}. The remaining
cases can not be treated by consideration of Nichols algebras of
subracks, see Remark \ref{obs:nosubracks}.

\medbreak\subsection{} The main results in this paper are
negative, in the sense that they do not provide any new example of
finite-dimensional pointed Hopf algebra. In fact, very few
examples of finite-dimensional non-trivial pointed Hopf algebras with non-abelian group are known,
see \cite{G2}; at the present moment, it is not clear what is the
class of non-abelian finite groups that may afford
finite-dimensional pointed Hopf algebras. Therefore, it is
important to narrow down as many examples as possible in order to
have a feeling of what this class might be. Here is our second
main result.

\begin{theorem}\label{th:an}
    Let $G = \am$, $m\ge 5$. If $\Oc$ is any conjugacy class of $G$, $\sigma\in \Oc$ is
    fixed and $\rho\in \widehat{C_{G}(\sigma)}$, then $\dim \toba(\Oc, \rho) = \infty$.
\end{theorem}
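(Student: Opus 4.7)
The plan is to combine Theorem \ref{th:sm-intro} with rack-theoretic arguments internal to $\am$. For $\sigma\in\am$, let $\Oc^{\am}$ denote its $\am$-conjugacy class and $\Oc^{\sm}$ its $\sm$-conjugacy class. A classical dichotomy says that either $\Oc^{\am}=\Oc^{\sm}$, or $\Oc^{\sm}=\Oc^{\am}\sqcup\Oc'$ splits into two $\am$-classes of equal size; the second alternative holds exactly when every cycle of $\sigma$ (counting fixed points) has odd length and all cycle lengths are pairwise distinct. Crucially, the three exceptional cycle types of Theorem \ref{th:sm-intro}---namely $(1^{n_1},2)$, $(2,3)$ in $\sco$, and $(2^{3})$ in $\sei$---all correspond to odd permutations, so for every $\sigma\in\am$ the $\sm$-class $\Oc^{\sm}$ falls \emph{outside} the exceptional list, and Theorem \ref{th:sm-intro} gives $\dim\toba(\Oc^{\sm},\rho')=\infty$ for every $\rho'\in\widehat{C_{\sm}(\sigma)}$.

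In the non-splitting case $\Oc^{\am}=\Oc^{\sm}$ the underlying rack is the same. If $\rho\in\widehat{C_{\am}(\sigma)}$ is stable under the conjugation action of $C_{\sm}(\sigma)\setminus C_{\am}(\sigma)$, then $\rho$ extends to some $\tilde\rho\in\widehat{C_{\sm}(\sigma)}$ and $M^{\am}(\Oc^{\am},\rho)\cong M^{\sm}(\Oc^{\sm},\tilde\rho)$ as braided vector spaces; the two Nichols algebras then coincide and Theorem \ref{th:sm-intro} applies. Otherwise $\rho$ does not extend, and one appeals to the cocycle-independent content behind Theorem \ref{th:sm-intro}: for non-exceptional cycle types, $\Oc^{\sm}$ typically contains a subrack of ``type D'' forcing $\toba(\Oc^{\sm},\q)$ to be infinite-dimensional for every cocycle $\q$. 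Since such a subrack already lies inside $\Oc^{\am}=\Oc^{\sm}$ and the obstruction is cocycle-independent, it yields $\dim\toba(\Oc^{\am},\rho)=\infty$ directly.

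The heart of the argument is the splitting case $\Oc^{\am}\subsetneq\Oc^{\sm}$, in which subracks of $\Oc^{\sm}$ need not remain in $\Oc^{\am}$. Here I would exhibit an abelian subrack inside $\Oc^{\am}$ itself and then invoke Heckenberger's classification of finite-dimensional Nichols algebras of diagonal type. For a single odd $p$-cycle $\sigma$, the set of powers $\{\sigma^{k}:\gcd(k,p)=1,\ \sigma^{k}\in\Oc^{\am}\}$ forms such an abelian subrack, on which the cocycle coming from $\rho\in\widehat{\langle\sigma\rangle}$ restricts to a diagonal braiding with parameters that are powers of $\rho(\sigma)$. For multi-cycle types with pairwise distinct odd parts, analogous commuting families arise by taking powers of individual cycle factors. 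In each case one verifies that the resulting diagonal braiding violates Heckenberger's finite-dimensionality criterion for every character $\rho$.

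The principal obstacle is exactly this Heckenberger check in the splitting case, since it has to be performed uniformly across the infinite list of distinct-odd-parts cycle types (as $m$ varies) and across every irreducible character of $C_{\am}(\sigma)$. An extra subtlety appears when the available abelian subrack inside $\Oc^{\am}$ has few elements---for instance, a single $5$-cycle in $\aco$ yields only $\{\sigma,\sigma^{-1}\}$ as the abelian subrack generated by powers---so the exclusion of finite-type Cartan diagrams must proceed character by character in the small-rank regime.
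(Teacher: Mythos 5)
Your overall architecture (type~D for most classes, abelian subracks plus Heckenberger for the rest) matches the paper's, but the logical bridge you build from Theorem~\ref{th:sm-intro} to $\am$ does not hold, and it hides exactly the cases where the real work lies. The subgroup technique (Lemma~\ref{lem:subgrupo_general}) only propagates infinite-dimensionality \emph{upward}: $M(\oc_\sigma^{\am},\rho)$ is a braided subspace of $M(\oc_\sigma^{\sm},\tilde\rho)$, so $\dim\toba(\oc^{\sm},\tilde\rho)=\infty$ gives no information about $\toba(\oc^{\am},\rho)$. Your non-splitting case with extendable $\rho$ is fine, but for non-extendable $\rho$ you fall back on ``$\Oc^{\sm}$ typically contains a subrack of type D,'' and this is precisely where the claim fails: the list of classes that are \emph{not} of type D (Theorem~\ref{th:racks-sn-liquidados}) is strictly longer than the exceptional list of Theorem~\ref{th:sm-intro}, because the extra classes --- $(p)$, $(1,p)$, $(1^n,3)$, $(3^2)$, $(1,2^2)$, $(1^2,2^2)$, $(2^4)$, $(2^2,3)$ --- are eliminated over $\sm$ by representation-theoretic arguments (\cite{AZ}, \cite{AF1}, \cite{afz}) that depend on the characters of $C_{\sm}(\sigma)$ and do not descend to $\am$. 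Those are exactly the classes Theorem~\ref{th:an} still has to handle.

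Within that residue, your abelian-subrack-of-powers construction covers (in spirit) the splitting types $(p)$ and $(1,p)$ and the types with distinct odd parts --- this is the content of \cite[Th.~2.3]{AF2}, which the paper simply cites --- but it says nothing about the non-splitting types $(1,2^2)$ in $\aco$, $(1^2,2^2)$ in $\as$, $(2^4)$ in $\ao$ and $(2^2,3)$ in $\A_7$: there $\sigma$ involves an even number of $2$-cycles, the powers of $\sigma$ (or of its cycle factors) give an abelian subrack that is too small or degenerate, and the class is not of type~D. The paper needs a separate idea here, namely Lemma~\ref{lema:a4xcr}: embed $\ac\times\Z/r$ diagonally, use the three commuting conjugates $(1\,2)(3\,4)\tau$, $(1\,3)(2\,4)\tau$, $(1\,4)(2\,3)\tau$ as a $3$-element abelian subrack, and rule out the resulting triangle Dynkin diagram by \cite[Table 2]{H1}. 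That construction, not powers of $\sigma$, is the missing ingredient; without it (or an equivalent) your proof does not close.
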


In order to state the consequences of this result for pointed Hopf
algebras, it is convenient to introduce the following terminology.
\begin{definition}\label{def-intro:gpo-collapses}
    We shall say that a finite group $G$ \emph{collapses} if for any finite-dimensional
    pointed Hopf algebra $H$, with $G(H) \simeq G$, necessarily $H\simeq\ku G$.
\end{definition}
%Matias: quite el then. Decia
%pointed Hopf algebra $H$, with $G(H) \simeq G$, then $H\simeq \ku G$.

By the Lifting Method \cite{AS-cambr}, we conclude from
Theorem~\ref{th:an}:
\begin{theorem}\label{th:an-pointed}
    If $m\ge 5$, then the alternating group $\am$ collapses. \qed
\end{theorem}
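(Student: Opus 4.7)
The plan is to invoke the Lifting Method of Andruskiewitsch--Schneider and reduce everything to Theorem~\ref{th:an}. Suppose $H$ is a finite-dimensional pointed Hopf algebra with $G(H) \cong \am$, and consider the coradical filtration. Then $\gr H \cong R \# \ku\am$, where the diagram $R$ is a finite-dimensional connected braided Hopf algebra in $\ydg$, coradically graded with $R(0) = \ku$ and $R(1) = V$, the infinitesimal braiding. The Nichols algebra $\toba(V)$ embeds as a braided subalgebra of $R$, so $\dim \toba(V) \le \dim R \le \dim H < \infty$.

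Next, I would decompose the Yetter--Drinfeld module $V$ into irreducible summands $V = \bigoplus_{i} M(\Oc_i, \rho_i)$, parameterized by pairs of conjugacy classes $\Oc_i$ of $\am$ and irreducible representations $\rho_i$ of the corresponding centralizers. Each $\toba(\Oc_i, \rho_i)$ is a braided subalgebra of $\toba(V)$, hence finite-dimensional. By Theorem~\ref{th:an}, no such pair $(\Oc_i, \rho_i)$ with $\dim \toba(\Oc_i, \rho_i) < \infty$ exists when $G = \am$ and $m\ge 5$. The only way out is that the index set of summands is empty, that is, $V = 0$.

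Finally, since $R$ is coradically graded with $R(1) = 0$, a routine induction on the filtration forces $R = \ku$: if $n$ is minimal with $R(n) \ne 0$, every nonzero element of $R(n)$ is primitive by degree considerations, hence must lie in $R(1) = 0$, a contradiction. Consequently $\gr H \cong \ku\am$, so $\dim H = |\am| = \dim \ku\am$, and since $\ku\am$ sits inside $H$ as its coradical, we conclude $H \cong \ku\am$. The entire deduction is standard once Theorem~\ref{th:an} is available; accordingly, there is no obstacle here, and the whole weight of the argument rests on Theorem~\ref{th:an}, whose proof must rule out finite dimensionality for every pair $(\Oc, \rho)$ across every alternating group $\am$ with $m\ge 5$.
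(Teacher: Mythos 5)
Your argument is correct and is exactly the deduction the paper intends: Theorem~\ref{th:an-pointed} is stated with no written proof precisely because it is the standard Lifting Method reduction (coradical filtration, $\gr H\cong R\#\ku\am$, infinitesimal braiding $V$, $\toba(V)$ finite-dimensional, semisimplicity of $\ydg$ forcing $V=0$ via Theorem~\ref{th:an}, hence $R=\ku$). You have simply written out the details that the paper delegates to \cite{AS-cambr}; there is no divergence and no gap.
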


This result was known for the particular cases $m=5$ and $m=7$
\cite{AF2,F-tesis}. We prove it for $m\ge 6$. Since $\A_3$ is
abelian, finite dimensional Nichols algebras over it are
classified, there are $25$ of them; this can be deduced from
\cite[Th. 1.3]{AS-adv}, \cite[Th. 1.8]{AS2}. Nichols algebras over
$\A_4$ are infinite-dimensional except for four pairs
corresponding to the classes of $(1\;2\;3)$ and $(1\;3\;2)$ and
the non-trivial characters of $\Z/3$ \cite[\S 2.2]{AF2}. Actually,
these four algebras are connected to each other either by an outer
automorphism of $\A_4$ or by the Galois group of $\mathbb
Q(\zeta_3)|\mathbb Q$ (the cyclotomic extension by third roots of
unity). Therefore, there is only one pair to study for $\A_4$.

\medbreak\subsection{} Our ultimate goal, towards the
classification of finite-dimensional poin\-ted Hopf algebras, is
to answer the following question.

\begin{question}\label{que:groups}
    For any finite group $G$ and for any $V\in \ydg$, determine if $\dim \toba(V) < \infty$.
\end{question}

Since the category $\ydg$ is semisimple, the question splits into
two cases:
\renewcommand{\theenumi}{\roman{enumi}}   \renewcommand{\labelenumi}{(\theenumi)}
\begin{enumerate}
    \item\label{item:irreducible}
        $V$ irreducible,
    \item \label{item:compl-reducible}
       $V$ completely reducible, \emph{i.~e.} direct sum of (at least 2) irreducibles.
\end{enumerate}

Case \eqref{item:irreducible} was addressed in several recent
papers for some groups and some conjugacy classes \cite{AZ, AF1,
AF2, AF3, afz, FGV, FV}; case \eqref{item:compl-reducible} was
considered in \cite{AHS, HS1}. Of course, the Nichols algebras of
the simple submodules of a completely reducible $V$ such that
$\toba(V)$ is finite-dimensional, should be finite-dimensional
too. But the interaction between the two cases goes also in the
other way. To explain this, we need to recall that Question
\ref{que:groups} can be rephrased in terms of racks. Indeed, the
Nichols algebra of a Yetter-Drinfeld module depends only on its
braiding, which in the case of a group algebra is defined in terms
of the conjugation. A rack is a set with a binary operation
satisfying the basic properties of the conjugation in a group (see
\S\ref{subsect:racks} below).  Then Question \ref{que:groups} is
equivalent to the following one, see \cite{AG1}.

\begin{question}\label{que:racks}
    For any finite rack $X$, for any $n\in \N$, and for any non-principal $2$-cocycle $\q$
    as in page \pageref{eqn:non-ppal-cocycle}, determine if $\dim \toba(X, \q) < \infty$.
\end{question}

In fact, the consideration of Question \ref{que:racks} is more
economical than the consideration of Question \ref{que:groups},
since different Yetter-Drinfeld modules over different groups may
give rise to the same pair $(X, \q)$, $X$ a rack and $\q$ a
2-cocycle. This point of view, advocated in \cite{G1, AG1}, is
analogous to the similar consideration of braided vector spaces of
diagonal type in the classification of finite-dimensional pointed
Hopf algebras with abelian group.

\medbreak The consideration of Question \ref{que:racks} has
another advantage. A basic and useful property of Nichols algebras
says that, if $W$ is a braided subspace of a braided vector space
$V$, then $\toba(W) \hookrightarrow \toba(V)$. For instance,
consider a simple $V = M(\oc, \rho)\in \ydg$ -- say $\dim \rho =
1$ for simplicity. If $X$ is a proper subrack of $\oc$, then
$M(\oc, \rho)$ has a braided subspace of the form $W = (\ku X,
c^q)$, which is clearly not a Yetter-Drinfeld submodule but can be
realized as a Yetter-Drinfeld module over smaller groups, that
could be reducible if $X$ is decomposable. If we know that $\dim
\toba(X, q) = \infty$, say because we have enough information on
one of these smaller groups, then $\dim\toba(\oc,\rho)=\infty$
too.

\medbreak Both Questions have the common drawback that there is no
structure theorem neither for finite groups nor for finite racks.
Therefore, and in order to collect evidence about what groups or
what racks might afford finite-dimensional Nichols algebras, it is
necessary to attack different classes of groups or of racks.
Prominent candidates are the finite simple groups and the finite
simple racks. Finite simple racks have been classified in
\cite[Th. 3.9, Th. 3.12]{AG1} (see also \cite{jo}); explicitly,
any simple rack is isomorphic to one and only one of the
following:

\begin{enumerate}
    \item\label{clasif:permutation-rack} $|X|=p$ a prime, $X\simeq\fp$ a permutation rack, that is $x\trid y=y+1$.
    \item $|X|=p^t$, $p$ a prime, $t\in \N$, $X\simeq({\fp}^t, T)$ is an affine crossed set
        where $T$ is the companion matrix of a monic irreducible polynomial of degree $t$,
        different from $X$ and $X-1$.
        %Verif: dec\'{\i}a esto, pero esto no da la unicidad anunciada.
        %$T\in \GL(t, \fp)$ acts irreducibly, $T\neq 0, \id$.
    \item $|X|$ is divisible by at least two different primes, and $X$ is twisted
        homogeneous. That is,  there exist a non-abelian simple group $L$, a positive
        integer $t$ and  $x \in \Aut (L^t)$, where $x$ acts by
        $x\cdot(l_1,\ldots,l_t)=(\theta(l_t),l_1,\ldots,l_{t-1})$ for some
        $\theta\in\Aut(L)$, such that $X = {\mathcal O_x}(n)$ is an orbit of the action
        $\rightharpoonup_x$ of $N=L^t$ on itself ($n \neq m^{-1}$ if $t=1$ and $x$ is inner,
        $x(p) = mpm^{-1}$).  Furthermore, $L$ and $t$ are unique, and $x$ only depends on
        its conjugacy class in $\Out (L^t)$.  Here, the action $\rightharpoonup_x$ is given
        by $p\rightharpoonup_xn=p\,n\,(x\cdot p^{-1})$.
\end{enumerate}

In particular, non-trivial conjugacy classes in finite simple
groups, and conjugacy classes in symmetric groups of elements not
in the alternating subgroup are simple racks.
Therefore, it is natural to begin by families of simple groups.

\medbreak\subsection{} To prove Theorems \ref{th:sm-intro} and
\ref{th:an}, we first establish Theorem
~\ref{th:racks-sn-liquidados}, namely that $\dim \toba(X,q) =
\infty$ for many conjugacy classes $X$ in $\sm$ or $\am$ and any
cocycle $q$. This relies on a result on Nichols algebras of
\emph{reducible} Yetter-Drinfeld modules \cite[Th. 8.6]{HS1}, this paper being a sequel to, and based on the results of, \cite{AHS}.
Indeed, let us say (informally) that a rack \emph{collapses} if
$\dim \toba(X,q) = \infty$ for any cocycle $q$; see the precise
statement of this notion in Def. \ref{def:rack-collapses}. To
translate one of the hypothesis of \cite[Th.8.6]{HS1} to
rack-theoretical terms, we introduce the notion of rack of type D.
We deduce from \cite[Th.8.6]{HS1} our Th. \ref{th:racks-claseD},
that says that any rack of type D collapses. It is easy to see
that if $\pi: Z \to X$ is an epimorphism of racks and $X$ is of
type D, then so is $Z$. But any indecomposable rack $Z$ has a
simple quotient $X$; this justifies further why we look at simple
racks, starting with non-trivial conjugacy classes in simple
groups. This is one of the consequences of the study of Nichols
algebras of decomposable Yetter-Drinfeld modules in the analogous
study of indecomposable ones. We stress that the computation of a
second rack-cohomology group is a difficult task. By \cite{EG}, it
coincides with a first group-cohomology group, but this does not
make the problem easier. Two of us have developed a program for
calculations with racks, that in particular computes the
rack-cohomology groups \cite{GV}. The point of view taken in this
article allows to disregard sometimes these considerations.

\medbreak
Actually, we give in Th.~\ref{th:racks-sn-liquidados} a
list of conjugacy class in $\sm$ or $\am$ which are of type D;
hence, if $X$ belongs
to this list and $\pi: Z \to X$ is an epimorphism of racks, then the Nichols algebra
$\toba(Z,q)$ has infinite-dimension for an arbitrary cocycle $q$.

\medbreak To consider the cases left open in
Th.~\ref{th:racks-sn-liquidados} we use
techniques of abelian subracks from our previous papers -- see Lemma~\ref{lema:a4xcr}.

\medbreak The paper is organized as follows. After Section 2 with
Preliminaries, we present our applications of \cite[Th. 8.6]{HS1}
in Section~\ref{sect:hs}. In Section~\ref{section:racks-caen} we
prove Th.~\ref{th:racks-sn-liquidados} and then complete the
proofs of Ths.~\ref{th:sm-intro} and \ref{th:an}.

\medbreak \subsection{Glossary}\label{subsec:glossary} We have
found useful to introduce several notations concerning racks and
groups in relation with the finite-dimensional Hopf algebras and
Nichols algebras. We collect here these new terms.

\begin{itemize}
    \item A finite group $G$ \emph{collapses}\footnote{This was referred to as \emph{of type B} in \cite{FGV}.}
    if for any finite-dimensional
    pointed Hopf algebra $H$, with $G(H) \simeq G$, necessarily $H\simeq\ku
    G$. Equivalently, for any $0\neq V\in \ydg$,    $\dim \toba(V) = \infty$. See Def.
    \ref{def-intro:gpo-collapses}, p.
    \pageref{def-intro:gpo-collapses}.

 \smallbreak\item A finite rack $X$ \emph{collapses} if for any finite faithful cocycle
    $\q$, the Nichols algebra $\toba(X,\q)$ is infinite dimensional. See Def.
    \ref{def:rack-collapses}, p. \pageref{def:rack-collapses}.

 \smallbreak\item A finite rack $X$ is of \emph{type B} if it
 satisfies condition (B) in Remark \ref{obs:collapse}, p.
 \pageref{def:tipob}.

 \smallbreak\item A finite rack $X$ is of \emph{type D} if it
 contains a decomposable subrack $Y = R\coprod S$  such that
$r\trid(s\trid(r\trid s)) \neq s$, for some $r\in R$, $s\in S$. \footnote{Here D stands for decomposable, and B for
the class that produces nothing.}
See Def. \ref{def:tipod}, p. \pageref{def:tipod}.

 \smallbreak\item A finite group $G$ is of \emph{type D} if all its
    non-trivial conjugacy classes are of type D. See \cite{afgv-spor}.
\end{itemize}

\section{Preliminaries}\label{sect:prel}
\subsection{Notation}\label{subsect:not}
%We write $\gi$ for the group of roots of $1$ in $\ku^\times$.
Let $G$ be a
group, $\sigma\in G$. We write $\vert G\vert$, resp. $\vert
\sigma\vert$, for the order of $G$, resp., $\sigma$;
$\mathcal{O}_{\sigma}=\mathcal{O}_{\sigma}^{G}$ for the conjugacy
class of $\sigma$ in $G$, with a superscript $G$ if emphasis is
needed. Also, $\widehat{G}$ is the set of isomorphism classes of
irreducible representations of $G$. If $X$ is a set, $\ku X$ is
the vector space with a basis $(e_x)_{x\in X}$.

A \emph{braided vector space} is a pair $(V,c)$, where $V$ is a
vector space and $c\in \GL(V\otimes V)$ is a solution of the braid
equation: $(c\otimes \id) (\id\otimes c) (c\otimes \id) =
(\id\otimes c) (c\otimes \id) (\id\otimes c)$. If $(V,c)$ is a
braided vector space, then $\toba(V)$ denotes its Nichols algebra.
See \cite[p. 22]{AS-cambr}.

\medbreak\subsection{Yetter-Drinfeld modules}\label{subsect:yd}
Let $G$ be a group. A \emph{Yetter-Drinfeld module} over the group
algebra $\ku G$ is a $G$-module $M$ provided with a $G$-grading $M
= \oplus_{g\in G} M_g$ such that $h\cdot M_g = M_{ghg^{-1}}$ for
all $g,h\in G$. The category of Yetter-Drinfeld modules over the
group algebra $\ku G$ is written $\ydg$. This is a braided
category; in particular any $M\in \ydg$ is a braided vector space
with $c\in \GL(M\otimes M)$ given by
\begin{equation}\label{eqn:brqiding-ydg}
    c(m\otimes n) = g\cdot n \otimes m, \qquad \text{for }m\in M_g\;(g\in G),\;n\in M.
\end{equation}

\medbreak The \emph{support} of $M\in \ydg$ is $\supp M = \{g\in
G: M_g \neq 0\}$.

Assume that $G$ is finite. Then the category $\ydg$ is semisimple
and its irreducible objects are parameterized by pairs $(\Oc,
\rho)$, where $\Oc$ a conjugacy class of $G$, $\sigma\in \Oc$
fixed, $\rho$ an irreducible representation of the centralizer
$C_{G}(\sigma)$ of $\sigma$.  If $M(\Oc, \rho)$ denotes the
irreducible Yetter-Drinfeld module corresponding to a pair $(\Oc,
\rho)$ and $V$ is the vector space affording the representation
$\rho$, then $M(\Oc, \rho)$ is the induced module
$\Ind_{C_{G}(\sigma)}^G\rho$ with the grading given by the
identification $\Ind_{C_{G}(\sigma)}^G \rho = \ku
G\otimes_{C_{G}(\sigma)} V \simeq \ku \Oc \otimes_\ku V$. If
$\sigma\in G$ and $\rho\in \widehat{C_{G}(\sigma)}$, then
$\rho(\sigma)$ is a scalar denoted $q_{\sigma\sigma}$. The Nichols
algebra of $M(\Oc, \rho)$ is denoted $\toba(\Oc, \rho)$.

\medbreak Notice that $M(\Oc, \rho)$ can be defined and is a
Yetter-Drinfeld module for any representation $\rho$ of
$C_{G}(\sigma)$.

\medbreak\subsection{Racks}\label{subsect:racks} We briefly recall
the definition and main properties of racks; see \cite{AG1} for
details,  more information and bibliographical references.

\medbreak A \emph{rack} is a pair $(X,\trid)$ where $X$ is a
non-empty set and $\trid:X\times X\to X$ is an operation such that
\begin{eqnarray}
    &\text{the map }\varphi_x = x\trid \underline{\quad}
        \text{ is invertible for any }x\in X,&\ \text{and} \\
    &x \trid(y\trid z) = (x\trid y) \trid(x\trid z)
        \text{ for all }x,y,z\in X.&\label{eqn:selfdist}
\end{eqnarray}

A \emph{morphism} of racks is a map of the underlying sets $f:X\to
Y$ such that $f(x\trid y)=f(x)\trid f(y)$ for all $x,y\in X$. Note
that for any rack $X$, $\varphi:X\to\s_X$ is a morphism of racks.
Really, there is an hierarchy
$$
\{\text{racks}\} \supset \{\text{quandles}\} \supset
\{\text{crossed sets}\},
$$
where a quandle is a rack $X$ such that $x\trid x = x$ for all
$x\in X$; and a crossed set is a quandle $X$ such that $x\trid y =
y$ implies $y\trid x = x$,  for any $x, y\in X$. The permutation rack mentioned
in page \pageref{clasif:permutation-rack}, class (i) of the classification,
is not a quandle. We are only interested in conjugacy classes
and their subracks, which are all crossed sets.

\medbreak Here are some examples and basic notions of racks.

\begin{itemize}
    \item A group $G$ is a rack (actually, a crossed set) with $x\trid y = xyx^{-1}$, $x,y\in G$. Furthermore, if
        $X\subset G$ is stable under conjugation by $G$, that is a union of conjugacy
        classes, then it is a subrack of $G$; e. g., the support of any $M\in \ydg$ is a
        subrack of $G$.

    \smallbreak\item If $A$ is an abelian group and $T\in\Aut(A)$, then $A$ becomes a rack with $x\trid
        y=(1-T)x+Ty$. It will be denoted by $(A,T)$ and called an \emph{affine
        rack}.\label{page:affinerack}

    \smallbreak\item A rack $X$ is \emph{decomposable} iff there exist disjoint subracks
        $X_1,X_2\subset X$ such that $X_i\trid X_j=X_j$ for any $1\le i,j\le 2$ and
        $X=X_1\coprod X_2 $. Otherwise,  $X$ is \emph{indecomposable}.

    \smallbreak    \item A \emph{decomposition} of a rack $X$ is a family $(X_i)_{i\in I}$ of
        pairwise disjoint subracks of $X$ such that $X = \coprod_{i\in I} X_i$ and
        $X\trid X_i = X_i$ for all $i\in I$.

    \smallbreak    \item A rack $X$ is said to be \emph{simple} iff $\card X>1$ and for any surjective
        morphism of racks $\pi: X \to Y$, either $\pi$ is a bijection or $\card Y = 1$.
\end{itemize}

\medbreak\subsection{Cocycles}\label{subsect:rack-cocycles}

Let $X$ be a rack, $n\in \N$. A map $q:X\times X\to\GL(n,\ku)$ is
a \emph{principal 2-cocycle of degree $n$} if
$$q_{x,y\trid z}q_{y,z}= q_{x\trid y,x\trid z}q_{x,z},$$
for all $x,y,z\in X$. Here is an equivalent formulation: let $V=
\ku X\otimes\ku^{n}$ and consider the linear isomorphism
$c^q:V\otimes V\to V\otimes V$,
$$c^q(e_xv\otimes e_yw) = e_{x\trid y}q_{x,y}(w)\otimes e_xv,$$
$x$, $y\in X$, $v$ ,$w\in\ku^{n}$. Then $q$ is a 2-cocycle iff
$c^q$ is a solution of the braid equation. If this is the case,
then the Nichols algebra of $(V, c^q)$ is denoted $\toba(X, q)$.

\medbreak More generally, let $(X_i)_{i\in I}$ be a decomposition
of a rack $X$ and let $\n = (n_i)_{i\in I}$ be a family of natural
numbers. Then a \emph{non-principal 2-cocycle of degree $\n$},
associated to the decomposition $(X_i)_{i\in I}$, is a family $\q
= (q_i)_{i\in I}$ of maps $q_i: X \times X_i \to \GL(n_i,\ku)$
such that
\begin{equation}\label{eqn:non-ppal-cocycle}
    q_i(x,y\trid z)q_i(y,z)= q_i(x\trid y,x\trid z)q_i(x,z),
\end{equation}
for all $x,y\in X$, $z\in X_i$, $i\in I$. Again, this notion is
related to braided vector spaces. Given a family $\q$, let $V=
\oplus_{i\in I} \ku X_i\otimes\ku^{n_i}$ and consider the linear
isomorphism $c^{\q}:V\otimes V\to V\otimes V$,
$$  c^{\q}(e_xv\otimes e_yw)= e_{x\trid y}q_i(x,y)(w)\otimes e_xv,$$
$x\in X_j$, $y\in X_i$, $v\in\ku^{n_j}$, $w\in\ku^{n_i}$. Then
$\q$ is a 2-cocycle iff $c^{\q}$ is a solution of the braid
equation. If this is the case, then the Nichols algebra of $(V,
c^{\q})$ is denoted $\toba(X, \q)$.

Let $X$ be a rack, $\q$ a non-principal 2-cocycle and $V$ as
above. Define a map $g: X \to \GL(V)$ by
\begin{equation}\label{eqn:defgpo}
   g_x(e_yw)= e_{x\trid y}q_i(x,y)(w), \qquad x\in X ,y\in X_i, i\in I.
\end{equation}
Note that $g: X \to \GL(V)$ is a morphism of racks.

\medbreak The next result shows why Nichols algebras associated to
racks and cocycles are important for the classification of pointed
Hopf algebras. It says that Questions \ref{que:groups} and
\ref{que:racks} in the Introduction are indeed equivalent.

\begin{theorem}\label{th:ag414} \cite[Th. 4.14]{AG1}
    (i).  Let $X$ be a finite rack, $(X_i)_{i\in I}$ a decomposition of $X$, $\n \in \N^I$
    and $\q$ a $2$-cocycle as above.  If $G\subset\GL(V)$ is the subgroup generated by
    $(g_x)_{x\in X}$, then $V \in \ydg$.  If the image of $q_i$ generates a finite subgroup
    of $\GL(n_i,\ku)$ for all $i\in I$, then $G$ is finite.

    \medbreak
    (ii). Conversely, if $G$ is a finite group and $V \in\ydg$, then there
    exist a rack $X$, a decomposition $X = \coprod_{i\in I} X_i$, $\n \in \N^I$  and
    non-principal 2-cocycle $\q$ such that $V$ is given as above and the braiding
    $c\in\Aut(V\otimes V)$ in the category $\ydg$ coincides with $c^{\q}$. \qed
\end{theorem}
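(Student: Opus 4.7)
The plan is to build, in (i), the required Yetter--Drinfeld structure on $V=\bigoplus_{i\in I}\ku X_i\otimes\ku^{n_i}$ directly from the map $g\colon X\to\GL(V)$ of \eqref{eqn:defgpo}, and in (ii) to reverse the construction by extracting a rack together with a non-principal cocycle from the support and the action of $V\in\ydg$.

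\emph{Sketch of (i).} The $G$-action on $V$ is built in, so the task is to exhibit a compatible $G$-grading. The natural candidate is $V_g=\bigoplus_{x\in X_i,\ g_x=g}\ku e_x\otimes\ku^{n_i}$, and the Yetter--Drinfeld compatibility $h\cdot V_g\subset V_{hgh^{-1}}$ reduces (by induction on word length in the generators $g_x$) to checking that $\varphi\colon X\to G$, $x\mapsto g_x$, is a morphism of racks, i.e.\ $g_y g_x g_y^{-1}=g_{y\trid x}$ in $G$. Evaluating both sides on a basis vector $e_z w$, the index moves by $y\trid(x\trid(\varphi_y^{-1}z))=(y\trid x)\trid z$ via self-distributivity \eqref{eqn:selfdist}, and the matching of the $\GL(n_k)$-scalars is precisely the cocycle identity \eqref{eqn:non-ppal-cocycle} applied to $(y,x,\varphi_y^{-1}z)$. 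The Yetter--Drinfeld braiding \eqref{eqn:brqiding-ydg} on the resulting module then visibly equals $c^{\q}$. For finiteness, each $g_x$ is a ``monomial'' automorphism which permutes $\{e_y\}_{y\in X}$ and scales by an element of some $\Imm(q_i)$; hence $G$ embeds in a finite subgroup of $\GL(V)$ whenever each $\Imm(q_i)$ is finite.

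\emph{Sketch of (ii).} Conversely, let $X=\supp V$; the compatibility $h\cdot V_g\subset V_{hgh^{-1}}$ forces $X$ to be a subrack of $G$. Decompose $X=\coprod_{i\in I} X_i$ into its connected (indecomposable) components, equivalently the orbits of $\langle \varphi_x\rangle_{x\in X}$ acting on $X$. Since conjugation by any $h\in G$ gives a linear isomorphism $V_g\xrightarrow{\sim}V_{hgh^{-1}}$, the dimension $n_i:=\dim V_g$ is constant for $g\in X_i$; choosing bases of every $V_g$ yields an identification $V\simeq\bigoplus_i\ku X_i\otimes\ku^{n_i}$. For $x\in X$ and $y\in X_i$, define $q_i(x,y)\in\GL(n_i,\ku)$ to be the matrix of $x\cdot\underline{\ }\colon V_y\to V_{x\trid y}$ in the chosen bases. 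The cocycle identity \eqref{eqn:non-ppal-cocycle} then encodes the group identity $xy=(x\trid y)\cdot x$ applied to vectors in $V_z$: both sides equal the matrix of the linear map induced by $xy\in G$ on $V_z$, computed through the two factorizations $x\cdot(y\cdot\underline{\ })$ and $(x\trid y)\cdot(x\cdot\underline{\ })$. Matching \eqref{eqn:brqiding-ydg} with the definition of $c^{\q}$ finishes the identification of the braidings.

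\emph{Main obstacle.} No individual step is deep; the substantive bookkeeping lies in (i), where self-distributivity and the cocycle identity \eqref{eqn:non-ppal-cocycle} must be combined in exactly the right way so that $\varphi\colon X\to G$ descends to a rack morphism (and not merely a $\GL(V)$-valued map). Once this is verified, the remaining verifications -- well-definedness of the grading, matching of braidings, and finiteness of $G$ -- are direct. In (ii) one should additionally be careful that the chosen bases are consistent enough that $q_i$ depends only on $i$ rather than on individual $g\in X_i$, but this is automatic since the ambient vector space $\ku^{n_i}$ is the same for every $g\in X_i$.
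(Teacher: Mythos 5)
This statement is imported from \cite[Th.\ 4.14]{AG1}; the paper offers no proof of its own (it is stated with a \qed), so there is nothing internal to compare against. Your sketch is essentially correct and follows the natural argument. In (i), the key point is exactly as you say: $g_xg_y=g_{x\trid y}g_x$ on $\GL(V)$, with the index part given by self-distributivity and the block part by \eqref{eqn:non-ppal-cocycle}, which makes $x\mapsto g_x$ a rack morphism (as the paper itself notes after \eqref{eqn:defgpo}) and hence makes the grading $V_g=\bigoplus_{g_x=g}\ku e_x\otimes\ku^{n_i}$ compatible with the action. One small imprecision: since $X$ is finite the image of $q_i$ is automatically a finite set; what you need for finiteness of $G$ is that the \emph{subgroup generated} by $\Imm(q_i)$ is finite, because words in the $g_x^{\pm1}$ are block-monomial matrices whose blocks are arbitrary products of elements of $\Imm(q_i)$ and their inverses. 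In (ii) your construction differs mildly from the one in \cite{AG1} that the paper actually relies on later: in the proof of Remark \ref{obs:collapse} the rack produced by part (ii) is described as the disjoint union $Y=\coprod_i\supp M_i$ over a decomposition of $M$ into \emph{irreducible} Yetter--Drinfeld summands (so a conjugacy class can occur with multiplicity), whereas you take $X=\supp V$ itself, split into its indecomposable components, with $n_i=\dim V_g$ absorbing all summands supported on a given component. Both choices satisfy the existence statement as written -- your verification of the cocycle identity as the two factorizations of the action of $xy$ on $V_z$ is correct -- but the paper's subsequent use of the theorem depends on the finer decomposition, so it is worth being aware that the two constructions are not literally the same.
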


If $X$ is indecomposable, then there is only one possible
decomposition and only principal 2-cocycles arise. Conversely, the
proof of \cite[Th. 4.14]{AG1} shows that if $V \in\ydg$ as in part
(ii) is irreducible, then the cocycle $\q$ is actually principal.

\medbreak For an easy way of reference, we shall say that a
cocycle $\q$ is \emph{finite} if the image of $q_i$ generates a
finite subgroup of $\GL(n_i,\ku)$ for all $i\in I$.

\medbreak Parallel to the approach to the classification of \fd{}
pointed Hopf algebras \emph{group-by-group}, we envisage the
approach to the classification of \fd{} Nichols algebras
\emph{rack-by-rack}. It is then natural to introduce the following
terminology.

\medbreak Let $X$ be a finite rack and $\q$ a 2-cocycle. First, we
shall say that $(X, \q)$ is \emph{faithful} if the morphism of
racks $g: X \to \GL(V)$ defined in \eqref{eqn:defgpo} is
injective; if $X$ is clear from the context, we shall also say
that $\q$ is faithful. Recall that a rack $X$ is \emph{faithful}
if $\varphi: X \to \s_X$ is injective  \cite[Def. 1.11]{AG1};
clearly, if $X$ is faithful, then $(X, \q)$ is faithful for any
$\q$.

\begin{definition}\label{def:rack-collapses}
    We shall say that a finite rack $X$ \emph{collapses} if for any finite faithful cocycle
    $\q$ (associated to any decomposition of $X$ and of any degree $\n$),
    $\dim\toba(X,\q)=\infty$.
\end{definition}

Here is a useful reformulation of the preceding definition.
\begin{obs}\label{obs:collapse}\label{def:tipob}
    Let $X$ be a finite rack.  Assume that
    \begin{enumerate} \renewcommand{\theenumi}{\alph{enumi}}\renewcommand{\labelenumi}{(\theenumi)}
        \item[(B)]\label{coll-dos}
            For any finite group $G$ and any  $M\in\ydg$ such that $X$ is isomorphic to a
            subrack of $\supp M$, $\dim\toba(M)=\infty$.
    \end{enumerate}
    Then $X$ collapses. The converse is true if $X$ is faithful.
\end{obs}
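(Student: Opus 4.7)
The plan is to prove the two implications separately, using Theorem~\ref{th:ag414} as the dictionary between pairs $(X,\q)$ and Yetter-Drinfeld modules, together with the subalgebra principle $\toba(W) \hookrightarrow \toba(V)$ whenever $W$ is a braided subspace of $V$.

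For (B) $\Rightarrow$ $X$ collapses, let $\q$ be any finite faithful cocycle associated to a decomposition $(X_i)_{i\in I}$ of degree $\n$. Form $V=\bigoplus_{i\in I}\ku X_i\otimes\ku^{n_i}$ as in \S\ref{subsect:rack-cocycles} and apply Theorem~\ref{th:ag414}(i): the subgroup $G=\langle g_x:x\in X\rangle\subset\GL(V)$ is finite (because $\q$ is finite), one has $V\in\ydg$ with braiding $c^{\q}$, and therefore $\toba(X,\q)=\toba(V)$. By construction each $e_x w$ is $G$-homogeneous of degree $g_x$, so $\supp V = g(X)$; faithfulness of $\q$ is exactly the injectivity of $g:X\to G$, and hence $X\simeq g(X)$ as a subrack of $\supp V$. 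Hypothesis (B) then gives $\dim\toba(V)=\infty$.

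Conversely, assume $X$ is faithful and collapses, and let $G$ be finite with $M\in\ydg$ containing (an isomorphic copy of) $X$ in $\supp M$; identify $X$ with this subrack. The subspace $W=\bigoplus_{x\in X} M_x \subset M$ is a braided subspace, since $X$ is closed under $\trid$ and the braiding is given by \eqref{eqn:brqiding-ydg}, so $\toba(W)\hookrightarrow \toba(M)$. Intersecting $X$ with the orbit decomposition of $\supp M$ produces a decomposition $(X_i)_{i\in I}$ of $X$, and reading off the braiding on $W$ in the basis $(e_x)_{x\in X}$ yields a cocycle $\q$ whose values are the $G$-actions restricted to the graded pieces $M_y$; this $\q$ is automatically finite because $G$ is.

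It remains to verify that the induced $\q$ is faithful, so that the collapse of $X$ applies. If $x\neq y$ in $X$, faithfulness of $X$ as a rack produces $z\in X$ with $x\trid z\neq y\trid z$; hence $g_x(e_z w)$ and $g_y(e_z w)$ land in distinct homogeneous components of $W$, so $g_x\neq g_y$ and $g:X\to\GL(W)$ is injective. Because $X$ collapses, $\dim\toba(W)=\dim\toba(X,\q)=\infty$, and therefore $\dim\toba(M)=\infty$. There is no serious obstacle: both directions reduce to bookkeeping with Theorem~\ref{th:ag414} and the subalgebra principle. The only mild subtlety, and the reason the ``faithful'' hypothesis enters only in the converse, is the last observation: faithfulness of $X$ as a rack propagates automatically to every induced cocycle.
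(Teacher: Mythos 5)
Your proof is correct and follows essentially the same route as the paper: the forward direction is Theorem~\ref{th:ag414}(i) plus the observation that faithfulness of $\q$ identifies $X$ with $\supp V$, and the converse passes from $M$ to the braided subspace $M_X$, reads off a (finite) cocycle as in Theorem~\ref{th:ag414}(ii), and checks that faithfulness of the rack $X$ forces faithfulness of any induced cocycle. The paper phrases the converse by restricting the cocycle on $Y=\coprod_i\supp M_i$ to $X$ rather than working on $M_X$ directly, but this is only a difference of bookkeeping, not of substance.
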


\pf Assume (B). Let $\q$ be a finite faithful cocycle. By
Th.~\ref{th:ag414} (i), the braided vector space $(V,c^{\q})$
arises from a Yetter-Drinfeld module over a finite group $\Gamma$;
since $\q$ is faithful, $X$ can be identified with $\supp V$.

Now assume that $X$ is faithful and collapses. Let $G$, $M$ as in
(B). The rack $Y$ constructed in Th.~\ref{th:ag414} (ii) is $Y =
\coprod_{i\in I} \oc_i$, where $M = \oplus_{i\in I} M_i$ is a
decomposition in irreducible submodules and $\oc_i = \supp M_i$.
In general, $\supp M \neq Y$, but there is an injective morphism
of racks $\supp M \hookrightarrow Y$, which induces an injective
morphism of racks $X \hookrightarrow Y$. Since $X$ is faithful,
the restriction of the cocycle $\q$ on $Y$ to $X$ is faithful.
\epf

\section{Techniques}\label{sect:hs}
From now on, we shall consider any group $G$ as a rack with the
operation given by conjugation.

\medbreak\subsection{The technique of a suitable
subgroup}\label{sect:subgroup}

If $W$ is a braided subspace of a braided vector space $V$, then
$\toba(W) \hookrightarrow \toba(V)$ \cite[Cor. 2.3]{AS-cambr}. Let
$G$ be a group, $M\in \ydg$. Here are two ways of getting braided
subspaces of $M$:
\begin{itemize}
\item If $Y$ is a subrack of $\supp M$, then $M_Y := \oplus_{y\in Y} M_y$ is a braided subspace of $M$.
\item Let $\sigma\in G$, $H$ a subgroup of $G$ such that $\sigma\in H$.
    If $\rho$ is a representation of $C_{G}(\sigma)$, then
$M(\oc_{\sigma}^{H}, \rho\vert_{C_{H}(\sigma)})$  is a braided
subspace of $M(\oc_{\sigma}^G, \rho)$.
\end{itemize}

\medbreak These ways are actually closed related, by the following
observation.
\begin{lema}\label{lem:general}
    If $Y$ is a subrack of $\supp M$ and $K$ is the subgroup of $G$ generated by $Y$, then
    $M_Y$ is an object in $\ydk$.
\end{lema}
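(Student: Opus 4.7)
The plan is to verify the two structural requirements on $M_Y$ (a $K$-action and a $K$-grading) and then check that they are compatible in the sense of Yetter--Drinfeld modules.

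First I would define the structures. The $G$-grading $M = \oplus_{g \in G} M_g$ restricts to a decomposition $M_Y = \oplus_{y \in Y} M_y$. Since $Y \subseteq K$, this is naturally a $K$-grading, by declaring $(M_Y)_k = M_k$ if $k \in Y$ and $(M_Y)_k = 0$ otherwise. The candidate $K$-action is simply the restriction to $K$ of the $G$-action on $M$, provided we check it preserves $M_Y$.

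The crux is the following claim: $Y$ is stable under conjugation by every element of $K$. Once we have this, for $k \in K$ and $y \in Y$ the Yetter--Drinfeld axiom in $\ydg$ gives $k \cdot M_y \subseteq M_{kyk^{-1}} \subseteq M_Y$, so $K$ acts on $M_Y$; and moreover $k \cdot (M_Y)_y = (M_Y)_{kyk^{-1}}$, which is exactly the compatibility required for $M_Y$ to lie in $\ydk$.

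To establish the claim, I would argue as follows. Each $k \in K$ can be written as a product $k = y_1^{\epsilon_1} \cdots y_n^{\epsilon_n}$ with $y_i \in Y$ and $\epsilon_i \in \{\pm 1\}$, so it suffices to show that conjugation by $y$ and by $y^{-1}$ preserves $Y$ for every $y \in Y$. For conjugation by $y$ this is immediate, because $Y$ is a subrack of $G$ and the rack operation on $G$ is conjugation: $y \trid y' = y y' y^{-1} \in Y$. For conjugation by $y^{-1}$, note that by the rack axioms the map $\varphi_y = y \trid \underline{\quad}$ is a bijection of $Y$ onto itself; since $\varphi_y$ agrees with conjugation by $y$ and the latter is globally invertible with inverse conjugation by $y^{-1}$, the inverse map $\varphi_y^{-1}$ on $Y$ must coincide with the restriction of conjugation by $y^{-1}$. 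Hence $y^{-1} y' y \in Y$ for all $y, y' \in Y$. I do not anticipate a genuine obstacle here; the main point is simply the observation that invertibility of $\varphi_y$ on $Y$ forces inverse-conjugation to stabilize $Y$ as well.
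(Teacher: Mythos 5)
Your proof is correct and follows the same route as the paper: restrict the grading and the action, with the only real content being that $Y$ is stable under conjugation by $K=\langle Y\rangle$, which the paper asserts in one line and you justify by reducing to conjugation by $y^{\pm 1}$ for $y\in Y$ via the bijectivity of $\varphi_y|_Y$. The extra detail is welcome but the argument is essentially identical to the paper's.
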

\pf By construction, $M_Y$ is $K$-graded. Furthermore, if $k\in K$
and $y\in Y$, then $k\cdot M_y=M_{k\trid y} \subset M_Y$ since $Y$
is closed under conjugation by $K$. \epf

Assume now that $G$ is a finite group, and let $\sigma$, $\rho$
and $H$ be as above. Then $\rho\vert_{C_{H}(\sigma)} =
\tau_1\oplus\cdots\oplus\tau_s$ where
$\tau_j\in\widehat{C_{H}(\sigma)}$, $1\le j \le s$. Therefore, we
have the following criterium.

\begin{lema}\label{lem:subgrupo_general} Keep the notation above.
    \renewcommand{\theenumi}{\roman{enumi}}   \renewcommand{\labelenumi}{(\theenumi)}
    \begin{enumerate}
        \item\label{item:subgpo1}
            %Let $\rho\in\widehat{C_{G}(\sigma)}$ and
            %Let $\sigma\in H$, with $H$ a subgroup of $G$.
            %Then $\rho\vert_{C_{H}(\sigma)}$ decomposes as
            %$\rho\vert_{C_{H}(\sigma)}=\lambda_1\oplus\dots\oplus\lambda_s$, where
            %$\lambda_j\in\widehat{C_{H}(\sigma)}$, $1\le j \le s$. If
            %$\toba(M(\oc^H,\lambda_1)\oplus\dots\oplus M(\oc^H,\lambda_s))$ has infinite
            %dimension, then $\toba (\oc^G, \rho)$ also has infinite dimension. In particular,
            If $\dim \toba(\oc^H,\lambda)=\infty$ for all
            $\lambda\in\widehat{C_{H}(\sigma)}$, then $\dim\toba(\oc^G,\rho)=\infty$ for all
            $\rho\in \widehat{C_{G}(\sigma)}$.

        \medbreak
        \item\label{item:subgpo3}
            Let $\sigma_1,\sigma_2\in\oc^G\cap H$. Let
            $\mathcal{O}_i=\mathcal{O}_{\sigma_i}^H$ and
            assume that $\mathcal{O}_1\ne\mathcal{O}_2$. If
            $\dim\mathfrak{B}(M(\mathcal{O}_{1},\lambda_{1})\oplus
            M(\mathcal{O}_{2},\lambda_{2}))=\infty$ for all
            pairs $\lambda_1\in\widehat{C_H(\sigma_1)}$,
            $\lambda_2\in\widehat{C_H(\sigma_2)}$, then
            $\dim\mathfrak{B}(\mathcal{O}^{G},\rho)=\infty$
            for all $\rho\in \widehat{C_{G}(\sigma)}$. \qed

    %   \item\label{item:subgpogrl}
    %       Let $X\subseteq G$ be a subrack ($G$ is a rack with conjugation). Assume that
    %       $X=R\coprod S$, the disjoint union of two non-empty subracks. Let $H$ be the
    %       subgroup generated by $X$ (then there are no conjugacy classes of $H$
    %       intersecting both $R$ and $S$). If
    %       $\dim\toba(M_1\oplus M_2)=\infty$ for all $M_1,M_2\in\ydh$ such that
    %       $R\subseteq\supp M_1$ and $S\subseteq\supp M_2$, then $\dim\toba(M)=\infty$ for
    %       all $M\in\ydg$ such that $X\subseteq\supp M$.
    \end{enumerate}
\end{lema}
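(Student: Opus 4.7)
The approach is to exploit the two bullet points of \S\ref{sect:subgroup} together with the fundamental property $\toba(W)\hookrightarrow\toba(V)$ for $W$ a braided subspace of $V$. The plan is to realise the relevant Nichols algebras over $H$ as subalgebras of $\toba(\oc^G,\rho)$ and then use the contrapositive of the hypotheses.

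For part \eqref{item:subgpo1}, I would start from the second bullet of \S\ref{sect:subgroup}: since $\sigma\in H\subset G$, the Yetter-Drinfeld module $M(\oc^H_\sigma,\rho\vert_{C_H(\sigma)})\in\ydh$ is a braided subspace of $M(\oc^G,\rho)$. Decompose the restriction as $\rho\vert_{C_H(\sigma)}=\tau_1\oplus\cdots\oplus\tau_s$, with each $\tau_j\in\widehat{C_H(\sigma)}$. This induces, by induction from $C_H(\sigma)$ to $H$, a decomposition $M(\oc^H_\sigma,\rho\vert_{C_H(\sigma)})=\bigoplus_{j=1}^{s} M(\oc^H_\sigma,\tau_j)$ of Yetter-Drinfeld modules over $H$. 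In particular, each $M(\oc^H_\sigma,\tau_j)$ is a braided subspace of $M(\oc^G,\rho)$. Applying \cite[Cor. 2.3]{AS-cambr} twice (and using $\toba(\oc^H,\tau_j)=\toba(M(\oc^H_\sigma,\tau_j))$) yields $\toba(\oc^H,\tau_j)\hookrightarrow\toba(\oc^G,\rho)$. By hypothesis $\dim\toba(\oc^H,\tau_1)=\infty$, whence $\dim\toba(\oc^G,\rho)=\infty$.

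For part \eqref{item:subgpo3}, I would argue analogously but now using that $\oc^G\cap H$ is a union of $H$-conjugacy classes, so in particular contains $\oc_1\sqcup\oc_2$ as a subrack. Looking at the $H$-graded pieces of $M(\oc^G,\rho)$ over $\oc_i$ shows, as in the previous paragraph, that each such piece is (as a Yetter-Drinfeld module over $H$) the induced module $M(\oc_i,\mu_i)$, where $\mu_i$ is the representation of $C_H(\sigma_i)$ obtained by restricting $\rho$ to $C_H(\sigma_i)$ (after transporting by a fixed $g_i\in G$ with $g_i\sigma g_i^{-1}=\sigma_i$, so that $C_G(\sigma_i)=g_iC_G(\sigma)g_i^{-1}$). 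Hence
\[
M(\oc_1,\mu_1)\oplus M(\oc_2,\mu_2)
\]
is a braided subspace of $M(\oc^G,\rho)$ in $\ydh$. Decomposing $\mu_i=\bigoplus_k\tau_{i,k}$ into irreducibles of $C_H(\sigma_i)$ and picking any $\lambda_i\in\widehat{C_H(\sigma_i)}$ appearing in $\mu_i$, the submodule $M(\oc_1,\lambda_1)\oplus M(\oc_2,\lambda_2)$ is a braided subspace of $M(\oc^G,\rho)$, so by \cite[Cor. 2.3]{AS-cambr} its Nichols algebra embeds into $\toba(\oc^G,\rho)$. The hypothesis applied to this particular pair $(\lambda_1,\lambda_2)$ gives $\dim\toba(M(\oc_1,\lambda_1)\oplus M(\oc_2,\lambda_2))=\infty$, so $\dim\toba(\oc^G,\rho)=\infty$.

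No step is really difficult; the only point requiring care is the bookkeeping in \eqref{item:subgpo3} when $\sigma_1\neq\sigma$, where one has to transport the representation $\rho$ via conjugation by $g_i$ in order to identify the $H$-graded piece supported on $\oc_i$ with an honest induced module $M(\oc_i,\mu_i)$. Once that identification is made, both statements reduce to the monotonicity $\toba(W)\hookrightarrow\toba(V)$ applied to the canonical braided subspaces produced by \S\ref{sect:subgroup}.
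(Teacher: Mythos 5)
Your proposal is correct and follows exactly the route the paper intends: the lemma is stated with no written proof precisely because it is the immediate consequence of the two braided-subspace constructions and the decomposition $\rho\vert_{C_H(\sigma)}=\tau_1\oplus\cdots\oplus\tau_s$ recorded in the lines just above it, combined with $\toba(W)\hookrightarrow\toba(V)$. Your bookkeeping for part (ii) (transporting $\rho$ by $g_i$ and selecting one irreducible constituent over each $H$-class) is the standard filling-in of the same argument.
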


\medbreak\subsection{The splitting
technique}\label{sect:splitconj} We begin by stating the following
result of Heckenberger and Schneider, whose proof uses the main
Theorem of \cite{AHS}.

\begin{theorem}\label{thm:hs} \cite[Th. 8.6 (1)]{HS1}
    Let $G$ be a finite group, $M(\Oc, \rho)$, $M(\Oc', \rho')$ irreducible objects in
    $\ydg$ such that $\dim \toba(M(\Oc, \rho)\oplus M(\Oc', \rho')) <\infty$. Then
    %% we only use this one
    for all $r\in \oc$, $s\in \oc'$, $(rs)^2 = (sr)^2$. \qed
\end{theorem}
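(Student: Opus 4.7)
The plan is to translate the group identity $(rs)^2 = (sr)^2$ into a statement about the iterated braiding $c^2$ on bi-homogeneous vectors, and then extract that statement from the main theorem of \cite{AHS} applied to a suitable rank-$2$ braided subspace.

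First I would set up the explicit action of the braiding. For $r \in \Oc$ and $s \in \Oc'$, pick nonzero $v \in M(\Oc,\rho)_r$ and $w \in M(\Oc',\rho')_s$. By \eqref{eqn:brqiding-ydg}, $c(v \otimes w) = (r \cdot w) \otimes v$ with $r \cdot w \in M(\Oc',\rho')_{rsr^{-1}}$, and applying $c$ once more gives
$$
(c \circ c)(v \otimes w) \;=\; \big((rsr^{-1}) \cdot v\big) \otimes (r \cdot w),
$$
which lies in $M(\Oc,\rho)_{r_1} \otimes M(\Oc',\rho')_{s_1}$ with $s_1 = rsr^{-1}$ and $r_1 = rsrs^{-1}r^{-1}$. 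Observe that $r_1 s_1 = rs$, so the induced map $T:(r,s) \mapsto (r_1,s_1)$ preserves the product. Iterating once more, the grading of $(c \circ c)^2(v \otimes w)$ is $(r_2, s_2)$ with $s_2 = r_1 s_1 r_1^{-1} = rs \cdot r_1^{-1} = rs \cdot rsr^{-1}s^{-1}r^{-1}$. A direct manipulation shows $s_2 = s$ unfolds precisely to $rsrs = srsr$, i.e.\ $(rs)^2 = (sr)^2$; and then $r_2 = rs \cdot s_2^{-1} = r$ is automatic. Thus the theorem is equivalent to asserting that $T$ has order dividing $2$ on every pair $(r,s) \in \Oc \times \Oc'$.

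Next I would invoke \cite{AHS}. Let $U \subset M(\Oc,\rho) \oplus M(\Oc',\rho')$ be the braided subspace spanned by the $T$-orbit of the isotypic lines through $v$ and $w$, together with the centralizer-representation fibers. This $U$ is a semisimple Yetter--Drinfeld module over the subgroup $H := \la r, s \ra$, splitting as a sum of two irreducibles indexed by the two relevant $H$-orbits in the supports. By \cite[Cor.~2.3]{AS-cambr}, $\toba(U) \hookrightarrow \toba(M(\Oc,\rho) \oplus M(\Oc',\rho'))$, so $\dim \toba(U) < \infty$. The main theorem of \cite{AHS} then applies: the reflections $R_i(U)$ generate a Weyl groupoid whose associated root system is finite of rank $2$. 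Combining this with the non-abelian form of $c^2$, which is a tensor product of a grading-permutation part (namely $T$) and a scalar part coming from $\rho$ and $\rho'$, forces the $T$-orbit of $(r,s)$ to have length at most $2$, which is the desired conclusion.

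The main obstacle will be the clean rank-$2$ extraction: one must disentangle the permutation part $T$ of $c^2$ on the supports from the scalar contributions of $\rho$ and $\rho'$, and verify that the finite-type Cartan condition furnished by \cite{AHS} forces the permutation part itself (rather than merely some combined operator) to have order dividing $2$. A secondary subtlety is the careful construction of the sub-YD module $U$ over $H = \la r, s\ra$: one must ensure that it decomposes as a sum of exactly two irreducible objects to which the reflection machinery applies, and that restricting from $G$ to $H$ is compatible with that machinery.
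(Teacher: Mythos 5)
A point of order first: the paper does not prove this statement --- it is imported verbatim from \cite[Th.\ 8.6\,(1)]{HS1} (hence the \emph{qed} sign in the statement itself), and the paper only remarks that the proof in \cite{HS1} rests on the main theorem of \cite{AHS}. So there is no internal proof to compare against; what can be judged is whether your sketch would actually reconstruct the Heckenberger--Schneider argument. Your first half is correct and is precisely the observation the paper records immediately after the theorem: chasing degrees through $c^2$ shows that $(rs)^2=(sr)^2$ is equivalent to $r\trid(s\trid(r\trid s))=s$, i.e.\ to the induced permutation $T$ of $\Oc\times\Oc'$ squaring to the identity. That computation checks out ($r_1s_1=rs$, and $s_2=s$ unfolds to $rsrs=srsr$).

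The second half has a genuine gap at exactly the step you flag as ``the main obstacle.'' You assert that the finite rank-$2$ root system furnished by \cite{AHS}, ``combined with the non-abelian form of $c^2$,'' forces the $T$-orbit of $(r,s)$ to have length at most $2$, but you supply no mechanism, and none is available in the form you describe: the Weyl-groupoid data is not phrased in terms of orbits of the degree permutation on $\supp M_1\times\supp M_2$, so there is nothing yet to combine. The actual bridge in \cite{HS1} runs through the Cartan entries $a_{ij}=-\max\{k:(\ad M_i)^k(M_j)\neq 0\}$: the $G$-degrees occurring in $(\ad M_i)^k(M_j)$ are explicit products of elements of the supports, and a degree/support count shows that if $(rs)^2\neq(sr)^2$ for some $r\in\Oc$, $s\in\Oc'$, then $(\ad M_1)^2(M_2)\neq 0$ and $(\ad M_2)^2(M_1)\neq 0$ (certain homogeneous components cannot cancel precisely because $s$, $r\trid s$, $r\trid(r\trid s)$, \dots are then distinct). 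Hence $a_{12},a_{21}\le -2$, which no finite rank-$2$ root system tolerates ($a_{12}a_{21}\le 3$ in finite type). This adjoint-module argument is the real content of the theorem and is absent from your sketch; note also that it is carried out directly over $G$, so your restriction to $H=\la r,s\ra$ (which is delicate, since the restricted modules need not split into exactly two irreducibles) is not needed for this route.
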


We use the previous Theorem as in the following Proposition.
\begin{prop}\label{pr:lamejortec}
    Let $G$ be a finite group and $\mathcal{O}$ a conjugacy class in $G$.  Assume that there
    exist $\sigma_1,\sigma_2$ in $\mathcal{O}$ such that
    $(\sigma_1\sigma_2)^2\ne(\sigma_2\sigma_1)^2$. If there exists a subgroup $H$ such that
    $\sigma_1$ and $\sigma_2$ are not conjugate in $H$, then
    $\mathfrak{B}(\mathcal{O},\rho)=\infty$.
\end{prop}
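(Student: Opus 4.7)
The plan is to combine Theorem~\ref{thm:hs} with the subgroup reduction of Lemma~\ref{lem:subgrupo_general}~\eqref{item:subgpo3}. Since $\sigma_1,\sigma_2 \in \mathcal{O} \cap H$ are not conjugate in $H$ by hypothesis, their $H$-conjugacy classes $\mathcal{O}_1 = \mathcal{O}_{\sigma_1}^H$ and $\mathcal{O}_2 = \mathcal{O}_{\sigma_2}^H$ are distinct, so we are in the setting where Lemma~\ref{lem:subgrupo_general}~\eqref{item:subgpo3} applies.

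First, I would fix arbitrary $\lambda_1 \in \widehat{C_H(\sigma_1)}$ and $\lambda_2 \in \widehat{C_H(\sigma_2)}$, and consider the reducible Yetter-Drinfeld module $M(\mathcal{O}_1,\lambda_1) \oplus M(\mathcal{O}_2,\lambda_2) \in {}^{\ku H}_{\ku H}\mathcal{YD}$. Suppose, for contradiction, that $\dim \toba\bigl(M(\mathcal{O}_1,\lambda_1) \oplus M(\mathcal{O}_2,\lambda_2)\bigr) < \infty$. Then Theorem~\ref{thm:hs} forces $(rs)^2 = (sr)^2$ for every $r\in \mathcal{O}_1$ and every $s\in \mathcal{O}_2$. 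In particular, specializing to $r=\sigma_1$ and $s=\sigma_2$ yields $(\sigma_1\sigma_2)^2 = (\sigma_2\sigma_1)^2$, which directly contradicts the standing assumption. Hence $\dim \toba\bigl(M(\mathcal{O}_1,\lambda_1) \oplus M(\mathcal{O}_2,\lambda_2)\bigr) = \infty$ for every choice of $\lambda_1,\lambda_2$.

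Finally, I would invoke Lemma~\ref{lem:subgrupo_general}~\eqref{item:subgpo3} with this conclusion to deduce that $\dim \toba(\mathcal{O},\rho) = \infty$ for every $\rho \in \widehat{C_G(\sigma)}$, completing the proof. There is no real obstacle here: both Theorem~\ref{thm:hs} and Lemma~\ref{lem:subgrupo_general} are already in place, and the hypothesis $(\sigma_1\sigma_2)^2\ne (\sigma_2\sigma_1)^2$ is precisely designed to contradict the conclusion of Theorem~\ref{thm:hs}. The only subtle point to keep in mind is that the non-conjugacy assumption in $H$ is essential, since it is what guarantees that $\mathcal{O}_1\ne \mathcal{O}_2$ and therefore that the argument runs through the \emph{reducible} case of the Heckenberger-Schneider result; if $\sigma_1$ and $\sigma_2$ were conjugate in $H$, the obstruction $(rs)^2 = (sr)^2$ would not apply within a single orbit in this way.
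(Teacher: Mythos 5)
Your proposal is correct and follows exactly the paper's own (one-line) proof, which cites Theorem~\ref{thm:hs} together with Lemma~\ref{lem:subgrupo_general}~(ii); you have simply written out the contradiction argument that those two references compress. No issues.
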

\begin{proof}
    By Theorem~\ref{thm:hs} and Lemma~\ref{lem:subgrupo_general} (ii).
\end{proof}

We now aim to state a rack-theoretical version of
Prop.~\ref{pr:lamejortec}. Let $G$ be a group, $r,s\in G$. Then
$(rs)^2=(sr)^2 \iff r\trid(s\trid(r\trid s))=s$. We next introduce
a notion that is central in our considerations.

\begin{definition}\label{def:tipod}
    Let $(X, \trid)$ be a rack. We say that $X$ is \emph{of type D} if there exists  a
    decomposable subrack $Y = R\coprod S$ of $X$ such that
    \begin{equation}\label{eqn:hypothesis-subrack}
        r\trid(s\trid(r\trid s)) \neq s, \quad \text{for some } r\in R, s\in S.
    \end{equation}
\end{definition}

\begin{theorem}\label{th:racks-claseD}
    If $X$ is a finite rack of type D, then $X$  collapses.
\end{theorem}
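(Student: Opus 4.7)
The plan is to verify condition (B) of Remark~\ref{obs:collapse}, which by that remark suffices for $X$ to collapse. So I fix a finite group $G$ and a Yetter-Drinfeld module $M\in\ydg$ such that $X$ is isomorphic to a subrack of $\supp M\subset G$; the goal is $\dim\toba(M)=\infty$. Identifying $X$ with its image in $G$, the decomposable subrack $Y=R\coprod S$ and the elements $r\in R$, $s\in S$ furnished by the type-D hypothesis all sit inside $G$, and the condition $r\trid(s\trid(r\trid s))\neq s$ is just the group-theoretic inequality $(rs)^2\neq(sr)^2$.

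Next I pass to the subgroup $K=\langle Y\rangle\subset G$ and to the module $M_Y:=\bigoplus_{y\in Y}M_y$, which lies in $\ydk$ by Lemma~\ref{lem:general}. Because $Y$ is decomposable we have $R\trid S=S$ and $S\trid R=R$, so conjugation by every generator of $K$ preserves the partition $Y=R\coprod S$; hence $K$ preserves both $R$ and $S$ setwise. Two consequences follow: first, $M_Y$ splits in $\ydk$ as $M_R\oplus M_S$ with $M_R=\bigoplus_{y\in R}M_y$ and $M_S=\bigoplus_{y\in S}M_y$; second, the $K$-conjugacy classes $\Oc_r\subset R$ and $\Oc_s\subset S$ are disjoint.

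Decomposing $M_R$ and $M_S$ into irreducibles over $K$ and using that $r,s\in\supp M$ makes $M_r,M_s\neq 0$, I extract irreducible summands $N_r=M(\Oc_r,\rho)\subset M_R$ and $N_s=M(\Oc_s,\rho')\subset M_S$ for suitable $\rho\in\widehat{C_K(r)}$, $\rho'\in\widehat{C_K(s)}$. Then $N_r\oplus N_s$ is a Yetter-Drinfeld submodule of $M_Y$, and the underlying braided vector space is a braided subspace of $M$; by the standard embedding property of Nichols algebras (\cite[Cor.~2.3]{AS-cambr}) one has $\toba(N_r\oplus N_s)\hookrightarrow\toba(M)$. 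If $\dim\toba(M)$ were finite, Theorem~\ref{thm:hs} applied to the distinct irreducibles $N_r$ and $N_s$ would force $(uv)^2=(vu)^2$ for every $u\in\Oc_r$ and $v\in\Oc_s$, directly contradicting the choice $(rs)^2\neq(sr)^2$. Hence $\dim\toba(M)=\infty$, which establishes (B) and the theorem.

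The only delicate point in the argument is the $K$-stability of the partition $Y=R\coprod S$, since this is what guarantees that $\Oc_r$ and $\Oc_s$ are genuinely different $K$-conjugacy classes and so that Theorem~\ref{thm:hs} applies to a non-trivial pair of irreducibles. This stability, however, is immediate from the conditions $R\trid S=S$, $S\trid R=R$ on a decomposable subrack, and the rest of the proof is a formal assembly of Lemma~\ref{lem:general}, the braided-subspace embedding of Nichols algebras, and the Heckenberger-Schneider obstruction.
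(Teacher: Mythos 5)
Your proof is correct and follows essentially the same route as the paper's: verify condition (B) of Remark~\ref{obs:collapse}, pass to $K=\langle Y\rangle$ via Lemma~\ref{lem:general}, replace $M_R$ and $M_S$ by irreducible summands supported at $r$ and $s$, and invoke Theorem~\ref{thm:hs}. You merely spell out two points the paper leaves implicit, namely the $K$-stability of the partition $Y=R\coprod S$ and the choice of irreducibles whose supports contain $r$ and $s$ respectively.
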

\begin{proof}
    We shall prove that $X$ is, more generally, of type B as in
    Rem. \ref{obs:collapse}.  Let $Y\subseteq X$, $Y=R\coprod S$ a decomposition as in
    Definition~\ref{def:tipod}.  Let $G$ be a finite group, $M\in\ydg$ such that $X$ is
    isomorphic to a subrack of $\supp M$. We identify $X$ to this subrack, and then we can
    take $M_R$ and $M_S$, which are non trivial objects in $\ydk$, $K$ the subgroup of $G$
    generated by $Y$. We may assume that $M_R$ and $M_S$ are irreducible; otherwise, we
    replace them by irreducible submodules.  Now, $\dim\toba(M_R\oplus M_S) = \infty$
    by Th.~\ref{thm:hs}, and then $\dim\toba(M) = \infty$.
\end{proof}

Being of type D is an ubiquitous notion:

    \begin{enumerate}

\medbreak\item If $Y\subseteq X$ is a subrack of type D, then $X$
is of type D.

\medbreak\item If $Z$ is a finite rack and admits a rack
            epimorphism $\pi: Z\to X$, where $X$ is of type D, then $Z$
            is of type D. For, $\pi^{-1}(Y) = \pi^{-1}(R)\coprod \pi^{-1}(S)$ is a decomposable
            subrack of $Z$ satisfying \eqref{eqn:hypothesis-subrack}.
   \end{enumerate}

Let now $X$ be any finite rack. If some indecomposable component
\cite[Prop. 1.17]{AG1} is of type D, then $X$ is of type D. Assume
then that $X$ is indecomposable; then it admits a projection of
racks $\pi: X \to Y$ with $Y$ simple. Thus, it is of primary
interest to solve the following problem.

\begin{question}\label{que:rackssimples-typeD}
Determine all simple racks of type D.
\end{question}

In this paper we consider simple racks arising as conjugacy
classes of the alternating or symmetric groups. In subsequent
papers, we shall investigate other simple racks;  our paper
\cite{afgv-spor} is devoted to conjugacy classes in sporadic
groups.

\medbreak Here are some useful observations to detect conjugacy
classes of type D.
\begin{obs}\label{obs:simplequotients3}
    (a). If $X$ is of type D and $Z$ is a quandle, then $X \times Z$ is of type D.

    (b). Let $K$ be a subgroup of a finite group $G$ and pick $\kappa\in C_G(K)$. We
    consider the map $\R_{\kappa}: K\to G$, $g\mapsto \widetilde g := g\kappa$. Then the
    conjugacy class $\oc$ of $\tau\in K$ can be identified with a subrack of the conjugacy
    class $\widetilde{\oc}$ of $\widetilde{\tau}$ in $G$. Therefore, if $\oc$ is of type D,
    then $\widetilde{\oc}$ is of type D.
\end{obs}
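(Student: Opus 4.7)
The plan is to verify the two statements separately, both by exhibiting the required decomposable subrack and transferring the non-commutation condition \eqref{eqn:hypothesis-subrack} along a natural rack morphism.

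For part (a), let $Y = R \coprod S \subseteq X$ be a decomposable subrack witnessing that $X$ is of type D, with $r \in R$, $s \in S$ satisfying $r \trid (s \trid (r \trid s)) \neq s$. Fix any $z_0 \in Z$. Since $Z$ is a quandle, $z_0 \trid z_0 = z_0$, so $\{z_0\}$ is a one-element subrack of $Z$, and consequently $R \times \{z_0\}$ and $S \times \{z_0\}$ are subracks of $X \times Z$ whose disjoint union $Y \times \{z_0\}$ inherits the decomposition from $R \coprod S$. A direct computation in the product rack gives
\begin{equation*}
(r,z_0) \trid \bigl( (s,z_0) \trid ( (r,z_0) \trid (s,z_0) ) \bigr) = \bigl( r \trid (s \trid (r \trid s)),\; z_0 \bigr) \neq (s,z_0),
\end{equation*}
so $X \times Z$ is of type D.

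For part (b), the core step is to show that $\R_\kappa \colon K \to G$ is an injective morphism of racks landing in $\widetilde{\oc}$. Injectivity is obvious. For the rack structure, one uses that $\kappa$ centralizes $K$: for $g, h \in K$,
\begin{equation*}
\R_\kappa(g) \trid \R_\kappa(h) = (g\kappa)(h\kappa)(g\kappa)^{-1} = g\kappa h g^{-1} = g h g^{-1} \kappa = \R_\kappa(g \trid h).
\end{equation*}
In particular, if $g = h \tau h^{-1}$ with $h \in K$, then $g\kappa = h \widetilde{\tau} h^{-1} \in \widetilde{\oc}$, so $\R_\kappa$ restricts to an injective rack morphism $\oc \hookrightarrow \widetilde{\oc}$.

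With this identification, transferring a witness $Y = R \coprod S$ for $\oc$ being of type D is straightforward: set $\widetilde{R} = \R_\kappa(R)$ and $\widetilde{S} = \R_\kappa(S)$, which are disjoint subracks of $\widetilde{\oc}$ by injectivity; the relations $\widetilde{R} \trid \widetilde{S} \subseteq \widetilde{S}$ and $\widetilde{S} \trid \widetilde{R} \subseteq \widetilde{R}$ follow from the corresponding relations for $R$ and $S$ and the fact that $\R_\kappa$ is a morphism; and \eqref{eqn:hypothesis-subrack} for $(\widetilde r, \widetilde s) = (\R_\kappa(r), \R_\kappa(s))$ follows from the same equation for $(r,s)$ together with injectivity. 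There are no serious obstacles here; the only substantive point is the centralizer computation above, which is exactly where the hypothesis $\kappa \in C_G(K)$ is used.
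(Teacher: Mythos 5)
Your proposal is correct and follows essentially the same route as the paper's (very terse) proof: part (a) rests on the identity $(r,z)\trid(s,z)=(r\trid s,z)$ valid because $Z$ is a quandle, and part (b) on the observation that $\R_\kappa$ is an (injective) rack morphism into $\widetilde{\oc}$, which is exactly the centralizer computation you carry out. You have merely filled in the details the authors leave as ``straightforward.''
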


\pf (a). If $r,s\in X$ and $z\in Z$, then $(r,z)\trid (s,z) =
(r\trid s,z)$ because $Z$ is a quandle. The rest is
straightforward. (b) follows because the map $\R_{\kappa}$ is a morphism of racks.
\epf

\begin{exa}\label{exa:jordan}
Let $G$ be a finite group and $g = \tau\kappa\in G$, where $\tau$ and $\kappa\neq e$ commute.
Let $K =  C_G(\kappa) \ni \tau$; then
$\kappa\in C_G(K)$. Hence, the conjugacy class
$\oc$ of $\tau$ in $K$ can be identified with a subrack of the conjugacy
class $\widetilde{\oc}$ of $g$ in $G$ via the
morphism $\R_\kappa$ as in the preceding example.
Assume that
\begin{itemize}
\item $\widetilde{\oc}$ and $\oc$ are quasi-real of type $j$,
\item the orders $N$ of $\tau$ and $M$ of $\kappa$ are coprime,
\item $M$ does not divide $j-1$,
\item there exist $r_0$, $s_0\in \oc$ such that $r_0\trid(s_0\trid(r_0\trid s_0)) \neq s_0$.
\end{itemize}
Then $\widetilde{\oc}$ is of type D.
\end{exa}

\pf Observe first that $(\kappa^j x)\trid (\kappa^h y) = \kappa^h (x\trid y)$
for any $x,y\in K$, $j,h \in \Z$.
Let $R = \R_{\kappa}(\oc)$,
$S = \R_{\kappa^{j}}(\oc)$;  $R$ and $S$ are subracks of $\widetilde\oc$ by Remark \ref{obs:simplequotients3}.
For,
$$
S = \R_{\kappa^{j}}(\oc) \overset{\text{$\oc$ quasi-real}}= \R_{\kappa^{j}}(\oc_{\tau^j}^K) = \oc_{\tau^j\kappa^{j}}^K \subseteq \oc_{\tau^j\kappa^{j}}^G \overset{\text{$\widetilde{\oc}$ quasi-real}}= \widetilde\oc.
$$

We next claim that $R$ and $S$ are disjoint. Indeed, if
$z = \kappa x = \kappa^{j} y$, where $x,y\in \oc$, then the order
of $\kappa^{j-1}  = xy^{-1}$ divides both $N$ and $M$ (note that $x$ and $y$ commute);
hence $\kappa^{j-1} = e$, a contradiction.
Now $r = \kappa r_0\in R$,
$s = \kappa^{j} s_0\in S$ satisfy \eqref{eqn:hypothesis-subrack}.
\epf

\begin{obs}
Theorem \ref{th:racks-claseD} generalizes \cite[Cor. 4.12]{AF3}.
Indeed, if $\oct$ is the octahedral rack and $\oct^{(2)}$ is a
disjoint union of two copies of $\oct$, see \cite{AF3}, then
$\oct^{(2)}$ is of type D. For the other techniques in \cite{AF3},
see Example \ref{exa:dp-tipoD}.
\end{obs}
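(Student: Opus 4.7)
The plan is to unpack the definition of $\oct^{(2)}$ from \cite{AF3} and then verify the two ingredients of Definition \ref{def:tipod} directly. As a set, $\oct^{(2)}$ is by definition the disjoint union $R \coprod S$ of two copies of the octahedral rack, and the rack structure of $\oct^{(2)}$ makes each copy a subrack with $R \trid S \subseteq S$ and $S \trid R \subseteq R$ (i.e.\ $R$ and $S$ form a decomposition of $\oct^{(2)}$ in the sense of \S\ref{subsect:racks}). Hence $Y := R \coprod S$ is trivially a decomposable subrack of $\oct^{(2)}$, and the first ingredient of the definition is automatic.

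What remains is to produce a witness pair $r \in R$, $s \in S$ with $r \trid (s \trid (r \trid s)) \neq s$. For this I would invoke the explicit realization of $\oct^{(2)}$ inside a finite group recorded in \cite{AF3}, in which the rack operation is conjugation; there the target inequality reads $(rs)^2 \neq (sr)^2$. I would pick a convenient pair of elements from opposite copies and verify the inequality by a direct finite computation using that realization. Once such an $(r,s)$ is exhibited, the proof is complete by definition.

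The only real obstacle is the non-trivial content of the cross-action: if $R$ and $S$ acted on each other trivially, then $r \trid (s \trid (r \trid s))$ would collapse to $s$ for every choice of $r,s$, and the strategy would fail. Fortunately, the point of the construction of $\oct^{(2)}$ in \cite{AF3} is precisely that the two copies of $\oct$ interact non-trivially, and this is exactly what was used to force $\dim \toba(\oct^{(2)}, q) = \infty$ in \cite[Cor. 4.12]{AF3}. The same non-triviality now supplies the witness demanded by Definition \ref{def:tipod}, so the result reduces to a short check in the rack tables from \cite{AF3}. Combining this with Theorem \ref{th:racks-claseD} recovers \cite[Cor. 4.12]{AF3} and explains why that corollary is a special case of the type-D mechanism.
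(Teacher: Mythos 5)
Your proposal is correct and is essentially the argument the paper has in mind (the remark is stated there without proof): $\oct^{(2)} = R\coprod S$ is itself the required decomposable subrack, and since the cross-action in $X^{(2)}=X^{[1,1]}$ is $x^{[1]}\trid y = x\trid y$, the condition $r\trid(s\trid(r\trid s))\neq s$ reduces to finding $x,y$ in the octahedral rack, realized as the $4$-cycles in $\s_4$, with $(xy)^2\neq(yx)^2$ --- e.g.\ $x=(1\,2\,3\,4)$, $y=(1\,2\,4\,3)$ give $(xy)^2$ and $(yx)^2$ distinct $3$-cycles. The only thing missing from your write-up is this explicit witness, which is a one-line check.
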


\bigbreak

\subsection{Some constructions of
racks}\label{subsec:inverserack} We now present a general
construction that might be of independent interest. Let $X$ be a
rack, with operation $x\trid y=\varphi_x(y)$, and let $j$ an
integer. Let $X^{[j]}$ be a disjoint copy of $X$, with a fixed
bijection $X\to X^{[j]}$, $x\mapsto x^{[j]}$, $x\in X$. We define
a multiplication $\trid$ in $X^{[j]}$ by
\begin{equation}\label{eqn:rack^{j}}
x^{[j]}\trid y^{[j]} = (\varphi_x^{j}(y))^{[j]}, \quad x,y\in X.
\end{equation}
Notice that $X^{[j][k]} \simeq X^{[jk]}$, for $j, k\in \Z - 0$.

\begin{lema}\label{lema:rackinverso}
    \begin{enumerate}
        \item $X^{[j]}$ is a rack, called the $j$-th power of $X$.
        \item The disjoint union $X^{[1,j]}$ of $X$ and $X^{[j]}$  with multiplication such
            that $X$ and $X^{[j]}$ are subracks, and
            \begin{equation}\label{eqn:rack^1j}
                x\trid y^{[j]} = (x \trid y)^{[j]}, \quad x^{[j]}\trid y =
                \varphi_x^{j}(y), \quad x,y\in X,
            \end{equation}
            is a rack.
    \end{enumerate}
\end{lema}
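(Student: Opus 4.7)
The plan is to verify the two rack axioms---bijectivity of the left translations $\varphi$ and self-distributivity \eqref{eqn:selfdist}---for each of the claimed structures. The argument is computational, and rests on a single algebraic consequence of self-distributivity in $X$.

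The workhorse identity. Self-distributivity in $X$ is equivalent to
$$\varphi_{\varphi_x(y)} = \varphi_x\,\varphi_y\,\varphi_x^{-1}, \qquad x,y\in X.$$
Iterating (and using that $\varphi_x\in\Aut(X)$), an easy induction on $|j|$ yields, for every integer $j$,
$$\varphi_{\varphi_x^{j}(y)} \;=\; \varphi_x^{j}\,\varphi_y\,\varphi_x^{-j}, \qquad\text{hence}\qquad \varphi_{\varphi_x^{j}(y)}^{\,j} \;=\; \varphi_x^{j}\,\varphi_y^{\,j}\,\varphi_x^{-j}.$$
Equivalently, $\varphi_{\varphi_x^{j}(y)}^{\,j}\,\varphi_x^{j} = \varphi_x^{j}\,\varphi_y^{\,j}$. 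This is the only non-trivial input needed throughout.

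Proof of part (1). Bijectivity of $\varphi_{x^{[j]}}$ is clear, since it is transported from $\varphi_x^{j}$, which is a bijection of $X$. For self-distributivity, unfold both sides of $x^{[j]}\trid(y^{[j]}\trid z^{[j]})=(x^{[j]}\trid y^{[j]})\trid(x^{[j]}\trid z^{[j]})$ via \eqref{eqn:rack^{j}}: the left side equals $(\varphi_x^{j}\varphi_y^{j}(z))^{[j]}$ and the right side equals $(\varphi_{\varphi_x^{j}(y)}^{\,j}\varphi_x^{j}(z))^{[j]}$. Equality is precisely the workhorse identity evaluated at $z$.

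Proof of part (2). Again, bijectivity of left translations is immediate on each of the two components, using \eqref{eqn:rack^1j} and the invertibility of $\varphi_x$ and $\varphi_x^{j}$. Self-distributivity splits into the eight cases indexed by whether each of $x,y,z$ lies in $X$ or in $X^{[j]}$. The two homogeneous cases (all in $X$, respectively all in $X^{[j]}$) are given, respectively, by the hypothesis on $X$ and by part (1). Each of the six mixed cases unfolds mechanically via \eqref{eqn:rack^1j}: the terms $(\,\cdot\,)^{[j]}$ can be factored out, and after simplification one is left with either \eqref{eqn:selfdist} for $X$ (applied componentwise) or the workhorse identity $\varphi_{\varphi_x^{j}(y)}^{\,j}=\varphi_x^{j}\varphi_y^{\,j}\varphi_x^{-j}$ (or its $j=1$ conjugation version). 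The only real obstacle is notational bookkeeping---keeping straight which bracketed copy each element lives in so as to apply the correct rule from \eqref{eqn:rack^{j}}--\eqref{eqn:rack^1j}---so in the write-up I would present the workhorse identity as a preliminary lemma and then carry out a representative mixed case (say $x\in X^{[j]}$, $y\in X$, $z\in X^{[j]}$) in detail, leaving the remaining five as entirely analogous.
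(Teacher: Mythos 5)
Your proposal is correct and follows essentially the same route as the paper: the paper's proof also reduces everything to the identities $\varphi_x\varphi_y^{j}=\varphi_{x\trid y}^{j}\varphi_x$ and $\varphi_x^{j}\varphi_y=\varphi_{\varphi_x^{j}(y)}\varphi_x^{j}$ (hence $\varphi_x^{j}\varphi_y^{j}=\varphi_{\varphi_x^{j}(y)}^{j}\varphi_x^{j}$, your ``workhorse identity''), proved recursively from self-distributivity of $X$, and then checks the cases. The only cosmetic difference is that the paper treats $j=-1$ and $j\in\N$ separately and combines them via $X^{[j][k]}\simeq X^{[jk]}$, while you run one induction on $|j|$; the paper dismisses part (2) as straightforward where you spell out the case split.
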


$X^{[1,j]}$ is a particular case of an amalgamated sum of racks.
The rack $X^{[-1]}$ will be called the inverse rack of $X$ and
will be denoted $X'$; the corresponding bijection is denoted
$x\mapsto x'$. Note $X'' \simeq X$. The rack $X^{[1,1]}$ will be
denoted $X^{(2)}$ in accordance with \cite{AF3}.

\pf
We first show (a) for $j =-1$. The self-distributivity \eqref{eqn:selfdist} holds iff
$\varphi_x\varphi_y = \varphi_{x\trid y}\varphi_x$ for all $x$, $y\in X$, iff
$\varphi^{-1}_{\varphi^{-1}_x(u)}\varphi^{-1}_x = \varphi^{-1}_x\varphi^{-1}_{u}$ for all
$x$, $u\in X$ (setting $u = x\trid y$); this is in turn equivalent to the
self-distributivity for $X'$. We next show (a) for $j \in \N$. We check recursively that
$\varphi_x\varphi^{j}_y = \varphi^{j}_{x\trid y}\varphi_x$, $\varphi_x^{j}\varphi_y =
\varphi_{\varphi_x^{j}(y)} \varphi_x^{j}$. Hence $\varphi^{j}_x\varphi_y^{j} =
\varphi_{\varphi_x^{j}(y)}^{j}\varphi_x^{j}$, and we have self-distributivity for $X^{[j]}$.
Combining these two cases, we see that self-distributivity holds for $X^{[j]}$, for any
$j\in \Z - 0$. The proof of (b) is straightforward.
\epf

\begin{exa}\label{exa:rack-expandido}
%    \begin{enumerate}
%        \item If  $X$ is a subrack of a group $G$, then $X^{-1}$ is also a subrack,
%            isomorphic to the inverse rack $X'$. If $X\cap X^{-1} =
%            \emptyset$, then the disjoint union $X\cup X^{-1}$ is a subrack of
%            $G$ isomorphic to $X^{[1,-1]}$.
%       \item
            Let $j\in \Z - 0$. Assume that $X$ is a subrack of $G$ such
            that the map $\eta_j:X \to G$,  $x\mapsto x^{j}$, is inyective.
            Then the image $X^{j}$ of $\eta_j$ is also a subrack, isomorphic
            to the rack $X^{[j]}$. If $X\cap X^{j} = \emptyset$, then the
            disjoint union $X\cup X^{j}$ is a subrack of $G$ isomorphic to
            $X^{[1,j]}$.
%    \end{enumerate}
\end{exa}

\medbreak\subsection{Affine double racks}\label{subsec:affines}
Let $(A,T)$ be a finite affine rack, see page
\pageref{page:affinerack}. We realize it as a conjugacy class in
the following way. Let $d = \vert T\vert$. Consider the semidirect
product $G=A\rtimes\langle T\rangle$. The conjugation in $G$ gives
\begin{equation}\label{eqn:affinerackmult}
    (v, T^{h})\trid (w, T^j) = (T^{h}(w) + (\id - T^j)(v) , T^j).
\end{equation}

Let $\Q_{A, T}^{j} := \{(w, T^j): w\in A\}$, $j\in \Z/d$, a
subrack of $G$ isomorphic to the affine rack $(A, T^{j})$. Let
$\Q_{A,T}^{[1,j]}$ be the disjoint union
$\Q_{A,T}^1\cup\Q_{A,T}^j$, $j\in \Z/d$; this is a rack with
multiplication \eqref{eqn:affinerackmult}; it is called an
\emph{affine double rack}. If $j\neq 1$, it can be identified with
a subrack of $G$.

\begin{obs}\label{obs:q1jesq1j}
    If $(j)_T=\sum_{i=0}^{j-1}T^i$ is an isomorphism, then $\Q_{A,T}^j\simeq (\Q_{A,T}^1)^{[j]}$.
    Indeed, the map $(\Q_{A,T}^1)^{[j]}\to\Q_{A,T}^j$, $(v,T) \mapsto (v,T)^j=((j)_T v, T^j)$, is a rack
    isomorphism. Hence, $\Q_{A,T}^{[1,j]}$ is isomorphic to $(\mathsf Q_{A, T}^1)^{[1,j]}$,
cf. Lemma \ref{lema:rackinverso}.
\end{obs}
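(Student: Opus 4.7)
The remark contains two claims: (i) the explicit map $(\Q_{A,T}^1)^{[j]}\to\Q_{A,T}^j$, $(v,T)\mapsto((j)_Tv,T^j)$, is a rack isomorphism, and (ii) as a consequence, $\Q_{A,T}^{[1,j]}\simeq(\Q_{A,T}^1)^{[1,j]}$. My plan is to deduce both claims directly from Example~\ref{exa:rack-expandido} applied to $X=\Q_{A,T}^1$ sitting inside the semidirect product $G=A\rtimes\langle T\rangle$.

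The key preliminary computation is the $j$-th power in $G$. Using the multiplication $(a,T^h)(b,T^l)=(a+T^hb,T^{h+l})$, a one-line induction yields $(v,T)^j=(v+Tv+\cdots+T^{j-1}v,\,T^j)=((j)_Tv,T^j)$. Thus the map $\eta_j(x)=x^j$ of Example~\ref{exa:rack-expandido} sends $(v,T)\in\Q_{A,T}^1$ to $((j)_Tv,T^j)\in\Q_{A,T}^j$. The hypothesis that $(j)_T$ is an automorphism of $A$ gives precisely that $\eta_j$ is injective and that its image is all of $\Q_{A,T}^j$. Example~\ref{exa:rack-expandido} then yields (i) immediately: $\Q_{A,T}^j$, as a subrack of $G$, is isomorphic to $(\Q_{A,T}^1)^{[j]}$ via the stated formula.

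For (ii), observe that $\Q_{A,T}^1$ and $\Q_{A,T}^j$ are disjoint subsets of $G$ (their elements differ in the second coordinate, $T$ versus $T^j$, whenever $j\neq 1$). The second statement of Example~\ref{exa:rack-expandido} then tells us that the disjoint union $\Q_{A,T}^1\cup\Q_{A,T}^j$, which is by definition $\Q_{A,T}^{[1,j]}$, is a subrack of $G$ isomorphic to $X^{[1,j]}=(\Q_{A,T}^1)^{[1,j]}$, finishing the proof.

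There is no real obstacle; the hypothesis has been tailored so that Example~\ref{exa:rack-expandido} applies on the nose. If one instead wanted a direct verification of (i) without invoking that example, it would reduce, via \eqref{eqn:rack^1j} and \eqref{eqn:affinerackmult}, to checking that for $x=(v,T)$ and $y=(w,T)$ both $\Psi(x^{[j]}\trid y^{[j]})$ and $\Psi(x^{[j]})\trid \Psi(y^{[j]})$ equal $\bigl(T^j(j)_Tw+(\id-T^j)(j)_Tv,\,T^j\bigr)$; the only algebraic ingredient is that $(j)_T$ commutes with $T$, hence with $T^j$ and with $\id-T^j$.
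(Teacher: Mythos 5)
Your proposal is correct and follows essentially the same route as the paper: the remark's isomorphism is exactly the $j$-th power map $\eta_j$ of Example~\ref{exa:rack-expandido} applied to $X=\Q_{A,T}^1\subset A\rtimes\langle T\rangle$, with the hypothesis that $(j)_T$ is an isomorphism supplying injectivity of $\eta_j$ and surjectivity onto $\Q_{A,T}^j$, and the disjointness of the two subracks (for $j\neq 1$) giving the amalgamated statement. Your computation $(v,T)^j=((j)_Tv,T^j)$ and the remark on commutation of $(j)_T$ with $T$ are exactly the points the paper leaves implicit.
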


Let $A^T = \ker (\id - T)$ be the subgroup of points fixed by $T$.

\begin{obs}\label{obs:sin-invariantes} Assume that $A^T = 0$. Then $\Q_{A,T} = \Q^1_{A,T}$ is indecomposable
and it does not contain any abelian subrack with more than one
element.
\end{obs}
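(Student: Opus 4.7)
The plan is to identify $\Q^1_{A,T}$ with the underlying affine rack $(A,T)$ via $(w,T)\leftrightarrow w$; specialising \eqref{eqn:affinerackmult} to $h=j=1$ gives the rack operation
\[
v\trid w = T(w) + (\id - T)(v),\qquad v,w\in A,
\]
so both assertions reduce to direct computations with the endomorphism $\id - T$ of $A$.

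For indecomposability I will invoke the standard characterisation that a finite rack is indecomposable exactly when its inner group $\Gamma = \langle \varphi_v : v\in A\rangle \le \Sg_A$ acts transitively. Since $\varphi_0(w)=T(w)$, a short calculation gives
\[
\varphi_v\,\varphi_0^{-1}(w) = w + (\id - T)(v),
\]
so $\Gamma$ contains the translation by $(\id - T)(v)$ for every $v\in A$. The hypothesis $A^T = \ker(\id - T)=0$, combined with $|A|<\infty$, forces $(\id - T)(A)=A$; hence the translation subgroup of $\Gamma$ already acts transitively on $A$, and $\Q^1_{A,T}$ is indecomposable.

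For the second statement, recall that a subrack $Y$ is abelian when $y_1\trid y_2 = y_2$ for all $y_1,y_2\in Y$. The equation $v\trid w = w$ rewrites as $(\id - T)(v-w)=0$, i.e.\ $v-w\in A^T = 0$; hence $v=w$ and $|Y|\le 1$.

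The only point that requires any care is lining up conventions --- the equivalence ``indecomposable $\Leftrightarrow$ inner group transitive'' and the precise definition of abelian subrack used in the paper --- but once these are settled, both claims reduce to one-line computations with $\id - T$, so I do not anticipate a serious obstacle.
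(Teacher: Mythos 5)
Your proposal is correct and rests on exactly the same point as the paper's proof: the hypothesis $A^T=\ker(\id-T)=0$ together with finiteness makes $\id-T$ bijective, which simultaneously forces transitivity (the paper phrases this as a hypothetical decomposition class $R\ni(0,T)$ absorbing every $((\id-T)(v),T)$, you phrase it as the inner group containing all translations) and rules out abelian subracks via $(\id-T)(v-w)=0\Rightarrow v=w$. The detour through the ``indecomposable $\Leftrightarrow$ inner group transitive'' criterion is only a cosmetic repackaging of the paper's direct argument.
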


For, assume that $\Q_{A,T} = R\coprod S$ is a decomposition, with
$(0,T)\in R$. But then $R\ni (v,T)\trid (0,T) = ((\id - T)(v), T)$
for any $v\in A$; since $\id - T$ is bijective, $\Q_{A,T} = R$.
The second claim follows at once from \eqref{eqn:affinerackmult}.

\begin{lema}\label{lema:dobleafincero}
    Let $j\in \Z/d$.
\renewcommand{\theenumi}{\alph{enumi}}\renewcommand{\labelenumi}{(\theenumi)}
The rack $\Q_{A,T}^{[1,j]}$ is of type D,
        provided that
 \begin{equation}\label{eqn:Q1j-colapsa}
        (\id+T^{j+1})(\id-T)\neq 0.
    \end{equation}

\end{lema}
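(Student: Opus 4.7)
The plan is to show $X=\Q_{A,T}^{[1,j]}$ is of type D by taking $Y=X$ itself, with the natural decomposition $R=\Q_{A,T}^1$ and $S=\Q_{A,T}^j$, and exhibiting $r\in R$, $s\in S$ that witness \eqref{eqn:hypothesis-subrack}.

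First I verify this really is a decomposition. By \eqref{eqn:affinerackmult} the second $\langle T\rangle$-component is preserved on the right: $(v,T^h)\trid (w,T^k)\in\{(\ast,T^k)\}$. Hence $R\trid S\subseteq S$ and $S\trid R\subseteq R$; disjointness is immediate when $j\not\equiv 1\pmod{d}$, and when $j\equiv 1$ the two pieces are formally distinct by construction of the disjoint union.

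Next I compute the iterated conjugation. Take $r=(v,T)\in R$ and $s=(w,T^j)\in S$. Applying \eqref{eqn:affinerackmult} three times, the second coordinate stays equal to $T^j$ throughout, so only the first coordinate must be compared with $w$. A routine but careful expansion shows that $r\trid(s\trid(r\trid s))=s$ is equivalent to
\begin{equation*}
(T^{j+2}+T-T^{j+1}-\id)(w)+(T^{j+1}-T^{2j+1}+\id-T^j)(v)=0.
\end{equation*}
The coefficient of $w$ factors as $(T-\id)(\id+T^{j+1})=-(\id-T)(\id+T^{j+1})$, and using $\id-T^j=(\id-T)(j)_T$ the coefficient of $v$ equals $(\id+T^{j+1})(\id-T)(j)_T$. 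Hence the condition collapses to
\begin{equation*}
(\id+T^{j+1})(\id-T)\bigl[(j)_T(v)-w\bigr]=0.
\end{equation*}

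Finally, invoking the hypothesis \eqref{eqn:Q1j-colapsa}, I pick $w\in A$ with $(\id+T^{j+1})(\id-T)(w)\neq 0$ and set $v=0$; then $(j)_T(v)-w=-w$, so the expression above is nonzero. Thus $r=(0,T)$ and $s=(w,T^j)$ fulfill \eqref{eqn:hypothesis-subrack}, proving that $\Q_{A,T}^{[1,j]}$ is of type D. The only real hurdle in the argument is the bookkeeping of the three iterated multiplications and recognising the two polynomial factorizations that make $(\id+T^{j+1})(\id-T)$ emerge as a common factor; conceptually there is no obstacle.
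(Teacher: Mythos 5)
Your proof is correct and follows essentially the same route as the paper: the same decomposition $R=\Q_{A,T}^1$, $S=\Q_{A,T}^j$, and ultimately the same witnesses $r=(0,T)$, $s=(w,T^j)$ with $(\id+T^{j+1})(\id-T)(w)\neq 0$. The only difference is that you carry out the triple conjugation for a general $r=(v,T)$ before specializing to $v=0$, whereas the paper computes directly with $r=(0,T)$; the factorization $(\id+T^{j+1})(\id-T)$ emerges identically in both.
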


\pf Let $R=\Q_{A,T}^1$, $S=\Q_{A,T}^j$, $r=(0,T)\in R$. Then
$\Q_{A,T}^{[1,j]}=R\coprod S$.
We check
\eqref{eqn:hypothesis-subrack}.  Pick $v\notin \ker
(\id+T^{j+1})(\id-T)$ and $s=(v,T^j)\in S$. Then
\begin{align*}
    r\trid( s \trid ( r\trid s))=((T-T^{j+1}+T^{j+2})(v), T^j)\neq s,
\end{align*}
since $(\id-T+T^{j+1}-T^{j+2})(v)\neq 0$.
\epf

\medbreak\subsection{Applications of affine double
racks}\label{subsec:affines-appl}

If $X$ a rack that contains a subrack isomorphic to
$\Q_{A,T}^{[1,j]}$, for some affine rack satisfying
\eqref{eqn:Q1j-colapsa}, then $X$ is of type D (therefore it
collapses). We now present a way to check this hypothesis.

\begin{definition}
    Let $G$ be a finite group, $\oc$ a conjugacy class in $G$, $\sigma\in \Oc$.
    Classically, $\sigma\in G$ and $\oc$ are \emph{real} if $\sigma^{-1}\in \Oc$.
    If $\sigma$ is conjugated to $\sigma^{j}\neq \sigma$ for some
    $j\in\N$, then we say that $\sigma$ and $\oc$ are \emph{quasi-real of type
    $j$}. Clearly, any real $\sigma$, which is not an involution, is quasi-real of type
    $\vert\sigma\vert - 1$.
\end{definition}

\begin{prop}\label{lema:dobleafin}
    Let $G$ be a finite group, $\oc\subset G$ a conjugacy class which is quasi-real of type
    $j\in\N$. Let $(A,T)$ be an affine rack with $\vert T\vert = d$, and let $\psi:A\to\oc$
    be a monomorphism of racks. If
    $\id-T^j$ is an isomorphism and $\id + T^{j+1} \neq 0$, then $\oc$ is of
    type D.
\end{prop}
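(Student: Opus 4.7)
The strategy is to show that $\psi(A) \cup \psi(A)^j$, where $\psi(A)^j := \{x^j : x \in \psi(A)\}$, is a decomposable subrack of $\oc$ realizing the affine double rack $\Q^{[1,j]}_{A,T}$, and then to invoke Lemma~\ref{lema:dobleafincero}. As a preliminary I would note that $\id - T^j = (\id - T)(j)_T$ together with finiteness of $A$ forces both $\id - T$ and $(j)_T$ to be bijective; surjectivity of $\id - T$ then turns the hypothesis $\id + T^{j+1} \neq 0$ into $(\id + T^{j+1})(\id - T) \neq 0$, which is precisely the hypothesis needed in Lemma~\ref{lema:dobleafincero}.

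To apply Example~\ref{exa:rack-expandido} with $X = \psi(A)$, I need the power map $\eta_j: \psi(A) \to \oc$, $x \mapsto x^j$, to be well-defined and injective, and $\psi(A) \cap \psi(A)^j = \emptyset$. Well-definedness is immediate from quasi-reality of type $j$. For injectivity on $\oc$, one checks the equality $C_G(\sigma) = C_G(\sigma^j)$, which follows from $|\oc_{\sigma^j}| = |\oc_{\sigma}|$ (both being the same conjugacy class) combined with the inclusion $C_G(\sigma) \subseteq C_G(\sigma^j)$; this makes $\tau \mapsto \tau^j$ injective on $\oc$.

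The crucial step is the disjointness. Suppose $\psi(a) = \psi(b)^j$ for some $a,b \in A$. Then $\psi(a) \in \langle \psi(b) \rangle$, so $\psi(a)$ and $\psi(b)$ commute, whence $\psi(a) \trid \psi(b) = \psi(a)\psi(b)\psi(a)^{-1} = \psi(b)$. Since $\psi$ is a rack morphism, $\psi(a) \trid \psi(b) = \psi((\id - T)a + Tb)$; injectivity of $\psi$ and invertibility of $\id - T$ then force $a = b$. But then $\psi(a) = \psi(a)^j$, so $|\sigma|$ divides $j - 1$, contradicting $\sigma^j \neq \sigma$ from quasi-reality of type $j$. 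Hence $\psi(A) \cap \psi(A)^j = \emptyset$.

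With disjointness in hand, Example~\ref{exa:rack-expandido} gives that $\psi(A) \cup \psi(A)^j$ is a subrack of $\oc$ isomorphic to $\psi(A)^{[1, j]} \simeq (A, T)^{[1, j]}$; by Remark~\ref{obs:q1jesq1j} this is isomorphic to $\Q^{[1,j]}_{A, T}$, which is of type D by Lemma~\ref{lema:dobleafincero}. Since $\oc$ contains a subrack of type D, it is itself of type D. The main obstacle is the disjointness step, where the interplay between the rack morphism property of $\psi$, the invertibility of $\id - T$, and the defining inequality $\sigma^j \neq \sigma$ of quasi-reality are all essential.
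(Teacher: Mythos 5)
Your proposal is correct and follows essentially the same route as the paper: identify $\psi(A)\cup\psi(A)^j$ with $\Q_{A,T}^{[1,j]}$ via Example~\ref{exa:rack-expandido} and Remark~\ref{obs:q1jesq1j}, prove disjointness by showing that two commuting elements of $\psi(A)$ must coincide (the paper phrases this via Remark~\ref{obs:sin-invariantes}, you compute it directly from invertibility of $\id-T$), and conclude by Lemma~\ref{lema:dobleafincero}. Your explicit checks that $\eta_j$ is injective on $\oc$ and that surjectivity of $\id-T$ upgrades $\id+T^{j+1}\neq 0$ to $(\id+T^{j+1})(\id-T)\neq 0$ are details the paper leaves implicit, and both are sound.
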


%% Verif: aca se usa Y \cup Y^j, que no es el afin A^{1,j} en principio, sino la vieja
%%        construccion de Q^{1,j}. OJO
\pf
%Clearly $j\neq 0$ since, if $\sigma\in\oc$, $d$ divides $|\sigma|$ and $k$ is coprime to
%$|\sigma|$.
Since $\id-T^j = (\id -T) (j)_T$, both Remarks \ref{obs:q1jesq1j}
and \ref{obs:sin-invariantes} apply. If $Y = \psi(A)$, then $Y\cap
Y^{j} = \emptyset$. If not, pick $y \in Y\cap Y^{j}$, $y = x^j$
for some $x\in Y$. Then $x = x^j$, because $Y$ does not contain
any abelian subrack with more than one element. But this
contradicts the definition of quasi-real. Hence $\oc$ contains a
subrack isomorphic to $\Q_{A,T}^1\cup\Q_{A,T}^j$, which is
isomorphic to $\Q_{A,T}^{[1,j]}$. Now the statement follows from
Lemma~\ref{lema:dobleafincero}. \epf

\emph{Assume for the rest of this Subsection that $(A,T)$ is a
simple affine rack}; that is, $A = \F_{\hspace{-2pt}p}^{\, t}$,
$p$ a prime, and $T\in \GL(t, \F_{\hspace{-2pt}p}) - \{\id\}$ of
order $d$, acting irreducibly.

In this context, Lemma \ref{lema:dobleafincero} specializes as
follows.

\begin{lema}\label{lema:dobleafin-simple}
      If $j\in \Z/d$, then $\Q_{A,T}^{[1,j]}$ is of type D, provided that
    \begin{equation}\label{eqn:s-condition}
j\neq\begin{cases} \frac{d}{2} - 1, &\text{ if $p$ is odd}, \\
d-1 , &\text{ if $p=2$}.
\end{cases}
    \end{equation}
\qed\end{lema}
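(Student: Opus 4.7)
The plan is to apply the general Lemma~\ref{lema:dobleafincero} by showing that, in the simple affine setting, the condition \eqref{eqn:Q1j-colapsa} reduces to $T^{j+1}\neq -\id$, and then characterize when this can fail. The main structural fact I will use is that since $T$ acts irreducibly on $A=\F_p^t$, the commutant $\F_p[T]$ is isomorphic to the field $\F_{p^t}$; in particular, every nonzero polynomial in $T$ is an automorphism of $A$.

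First I would observe that $\id-T$ is invertible: its kernel is a $T$-stable subspace, and since $T\neq\id$ (as $T$ acts irreducibly on a space of dimension $\geq 1$ with order $d$ and the affine rack is simple), irreducibility forces this kernel to be zero. Hence \eqref{eqn:Q1j-colapsa} is equivalent to $\id+T^{j+1}\neq 0$, i.e.\ to $T^{j+1}\neq -\id$.

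Next I would analyze when the equality $T^{j+1}=-\id$ can hold, by cases on $p$. If $p=2$, then $-\id=\id$, and the equality holds iff $d\mid j+1$, that is, iff $j\equiv d-1\pmod d$; this is exactly the excluded value in \eqref{eqn:s-condition}. If $p$ is odd, the multiplicative group $\F_p[T]^*\cong\F_{p^t}^*$ is cyclic of even order $p^t-1$, so it contains a unique element of order $2$, which must be $-\id$. Consequently, $-\id$ lies in the cyclic subgroup $\langle T\rangle$ of order $d$ if and only if $d$ is even, and in that case $-\id=T^{d/2}$. Thus $T^{j+1}=-\id$ forces $d$ even and $j\equiv d/2-1\pmod d$, which is again the excluded value; when $d$ is odd, $d/2-1$ is not an integer and the condition $T^{j+1}\neq -\id$ holds automatically.

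In all cases, the hypothesis \eqref{eqn:s-condition} guarantees that $T^{j+1}\neq -\id$, hence \eqref{eqn:Q1j-colapsa} holds and Lemma~\ref{lema:dobleafincero} delivers the conclusion. There is no serious obstacle here; the argument is a direct reduction to the general lemma, the only subtle point being the identification $\F_p[T]\cong\F_{p^t}$, which is what pins down the single residue class mod $d$ at which the hypothesis can fail.
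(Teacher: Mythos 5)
Your proof is correct and follows exactly the route the paper intends: the lemma is stated with no written proof precisely because it is the direct specialization of Lemma~\ref{lema:dobleafincero}, using that irreducibility makes $\F_p[T]\cong\F_{p^t}$ a field (so $\id-T$ is invertible and condition \eqref{eqn:Q1j-colapsa} reduces to $T^{j+1}\neq-\id$), and your case analysis on $p$ pins down the excluded residue of $j$ exactly as in \eqref{eqn:s-condition}.
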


\begin{exa}\label{exa:dp-tipoD}
Let $G$ be a finite group, $\oc\subset G$ a conjugacy class which
is quasi-real of type $j\in\N$. Let $(A,T)$ be an affine \emph{simple}
rack with $\vert T\vert = d$, and let $\psi:A\to\oc$ be a
monomorphism of racks. If $j\neq\frac{d}{2} - 1$ when $p$ is odd, or
if $j\neq d-1$ when $p=2$, then $\oc$ is of
type D. Notice that the first case in \eqref{eqn:s-condition} always holds if $d$ is odd or 2.
\end{exa}

If $A = \Z/p$, $p$ a prime, and $T$ has order 2, then $\Q_{A, T}^{1}$ is
called a dihedral rack and denoted
$\D_p$ in accordance with \cite[Def. 2.2]{AF3}; thus $\Q_{A,T}^{[1,1]}$
is denoted $\D_p^{(2)}$.
Therefore, the splitting technique includes (without having to resort to look for
cocycles) the case of quasi-real orbits containing  a dihedral
subrack \cite[Cor. 2.9]{AF3}.

\section{Simple racks from $\sm$ and $\am$}\label{section:racks-caen}
\subsection{Notations on symmetric
groups}\label{subsect:sm} Let $\sigma\in \mathbb S_m$. We say that
$\sigma \in \mathbb S_m$ is of type
$(1^{n_1},2^{n_2},\dots,m^{n_m})$ if the decomposition of $\sigma$
as product of disjoint cycles contains $n_j$ cycles of length $j$,
for every $j$, $1\leq j \leq m$. Let $A_j=A_{1,j} \cdots
A_{n_j,j}$ be the product of the $n_j \geq 0$ disjoint $j$-cycles
$A_{1,j}$, \dots, $A_{n_j,j}$ of $\sigma$. Then
\begin{align}\label{eqn:descomp}
    \sigma=A_1 \cdots A_m;
\end{align}
we shall omit $A_j$ when $n_j =0$. The \emph{even} and the
\emph{odd parts} of $\sigma$ are
\begin{align}\label{sigma:even:odd}
    \sigmae:=\prod_{j \text{ even}}A_j,
        \qquad \sigma_{o}:=\prod_{1 < j \text{ odd}}A_j.
\end{align}
Thus, $\sigma = A_1\sigmae \sigma_{o} = \sigmae \sigma_{o}$; we define $\sigma_{o}$ in this
way for simplicity of some statements and proofs. We say also that $\sigma$ has type
$(1^{n_1}, 2^{n_2}, \dots, \so)$, for brevity.

%%%%%%%%%%%%%%%%%%%%%%%%%%%%%%%%%%%%%%%%%%%%%%%%%%%%%%%%%%%%%%%
%%%%% Verif: VVVVVVVVVVVVVVVVV  esto no hace falta ahora %%%%%%
%%%%%%%%%%%%%%%%%%%%%%%%%%%%%%%%%%%%%%%%%%%%%%%%%%%%%%%%%%%%%%%

We now recall the notation on representations of the centralizer
needed in the statement of Theorem \ref{th:sm-intro}. See
\cite{afz} for more details. First, the centralizer $\sm^\sigma =
T_1 \times \cdots \times T_m$, where
\begin{align}\label{genofcent}
   T_j =\langle A_{1,j}, \dots ,A_{n_j,j} \rangle \rtimes \langle
   B_{1,j}, \dots , B_{n_j-1,j} \rangle \simeq  (\Z/ j)^{n_j} \rtimes
   \s_{n_j},
\end{align}
$1\leq j \leq m$.  We describe the irreducible representations of
the centralizers. If $\rho=(\rho,V) \in
\widehat{C_{\s_m}(\sigma)}$, then $\rho=\rho_1\otimes \cdots
\otimes \rho_m$, where $\rho_j \in \widehat{T_{j}{\,}}$ has the
form
\begin{equation}\label{formrho2}
   \rho_j=\Ind_{(\Z/{j})^{n_j} \rtimes \mathbb
   S_{n_j}^{\chi_j}}^{(\Z/{j})^{n_j} \rtimes \mathbb S_{n_j}} (\chi_j
   \otimes \mu_j),
\end{equation} with
$\chi_j \in \widehat{(\Z/{j})^{n_j}}$ and $\mu_j \in
\widehat{\s_{n_j}^{\chi_j}}$ -- see \cite[Sect. 8.6]{S}. Here
$\s_{n_j}^{\chi_j}$ is the isotropy subgroup of $\chi_j$ under the
induced action of $\s_{n_j}$ on $\widehat{(\Z/{j})^{n_j}}$.
Actually, $\chi_j$ is of the form
$\chi_{(t_{1,j},\dots,t_{n_j,j})},$ where $0\leq
t_{1,j},\dots,t_{n_j,j}  \leq j-1$ are such that
\begin{align}\label{lost}
   \chi_{(t_{1,j},\dots,t_{n_j,j})}(A_{l,j})=\omega_j^{t_{l,j}},\qquad
   1\leq l \leq n_j,
\end{align}
with $\omega_j : = e ^ {\frac {2\pi i} {j}}$, where $i=
\sqrt{-1}$. Assume that $\deg(\rho)=1$; that is, $\deg(\rho_j)=1$,
for all $j$. Then $\mathbb S_{n_j}^{\chi_j}=\mathbb S_{n_j}$,
$\mu_j=\epsilon$  or $\sgn \in \widehat{\mathbb S_{n_j}}$,  for
all $j$. Hence, we have that $t_{j}:=t_{1,j}=\cdots=t_{n_j,j}$,
for every $j$, and $\rho_j= \chi_j \otimes \mu_j$. In that case,
we will denote $\chi_j=\chi_{(t_j,\dots,t_j)}$ by
$\overrightarrow{\chi_{t_j}}$.
%%%%%%%%%%%%%%%%%%%%%%%%%%%%%%%%%%%%%%%%%%%%%%%%%%%%%%%%%%%%%%
%%%%% Verif: ^^^^^^^^^^^^^^^^   esto no hace falta ahora  %%%%%
%%%%%%%%%%%%%%%%%%%%%%%%%%%%%%%%%%%%%%%%%%%%%%%%%%%%%%%%%%%%%%%

\medbreak\subsection{The collapse of simple racks from $\sm$ or
$\am$}\label{subsection:racks-caen}

Let $m\ge 5$.  In this Subsection, we show that many simple racks
arising as conjugacy classes in $\sm$ or $\am$ collapse. We fix
$\sigma\in \mathbb S_m$ be of type $(1^{n_1}, 2^{n_2}, \dots,
m^{n_m})$ and let
$$\Oc = \begin{cases} (a) \quad \text{the conjugacy class of
$\sigma$ in $\sm$}, &\text{if } \sigma\notin \am,\\ (b) \quad
\text{the conjugacy class of  $\sigma$ in $\am$}, &\text{if }
\sigma\in \am.
\end{cases}$$

Thus $\oc$ is a simple rack. If $\sigma$ in $\am$, then either
$\Oc_\sigma^{\s_m}$ splits as a disjoint union of two orbits in
$\am$, or else $\Oc_\sigma^{\s_m}=\Oc_\sigma^{\A_m}$. This last
possibility arises when either $n_i>0$ for some $i$ even, or else
$n_i>1$ for some $i$ odd. In any case, if $\Oc_\sigma^{\A_m}$ is
of type D, then so is $\Oc_\sigma^{\s_m}$.

\begin{theorem}\label{th:racks-sn-liquidados}
If the type of $\sigma$ is NOT in the list below, then $\oc$ is of
type D, hence it collapses.
\begin{itemize}
        \item[(a)] $(2,3)$;  $(2^3)$;  $(1^n,2)$.
        \item[(b)]
            $(3^2)$; $(2^2,3)$; $(1^n,3)$; $(2^4)$; $(1^2,2^2)$;
            $(1,2^2)$; $(1,p)$, $(p)$ with $p$ prime.
\end{itemize}
\end{theorem}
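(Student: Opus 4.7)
The strategy is to show, for each cycle type not in the exceptional list, that the conjugacy class $\oc$ (either in $\sm$ or in $\am$) is of type D, since by Th.~\ref{th:racks-claseD} this already implies that it collapses. Writing $\sigma = \sigmae\sigmao$ if there is a free fixed point (so $A_1$ is not empty) and grouping even and odd parts, I would organize the argument according to how $\sigma$ decomposes, matching each case to one of the three concrete detection devices developed above: (i) the affine double rack criterion, Prop.~\ref{lema:dobleafin} / Example~\ref{exa:dp-tipoD}, specialized to embedded dihedral $\D_p^{(2)}$; (ii) the Jordan decomposition criterion, Example~\ref{exa:jordan}, which upgrades type D from a centralizer to the ambient class whenever $\sigma$ factors as a product of two commuting elements of coprime orders; (iii) the subrack-monotonicity of type D, Remark~\ref{obs:simplequotients3}(a), which in particular permits enlarging by free fixed points.

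My first move would be the reduction to the alternating case: when $\sigma\in\am$ and $\oc^{\sm}_\sigma=\oc^{\am}_\sigma$, type D transfers directly; when $\oc^{\sm}_\sigma$ splits, I would work with $\oc^{\am}_\sigma$ throughout. Then I would treat the \emph{homogeneous single-length types} $(k^{n_k})$. For $k$ odd, $k\ge5$ (or $k=3$ with $n_3\ge 3$), the class is quasi-real of some type $j$, and one can embed a nontrivial affine subrack coming from a $k$-cycle together with its powers, with $T$ satisfying~\eqref{eqn:s-condition}, so Example~\ref{exa:dp-tipoD} applies; for $k$ even the class is real, hence quasi-real of type $k-1$, and an analogous embedding $\Q^{[1,j]}_{A,T}$ is found inside $\sm$-conjugates of $\sigma$. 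The presence of free fixed points in $(1^{n_1},k^{n_k})$ is handled by Remark~\ref{obs:simplequotients3}(a) after noting that the union of fixed points is a trivial quandle factor.

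Next, the \emph{mixed types} (several cycle lengths $>1$) are attacked via Example~\ref{exa:jordan}. Writing $\sigma=\sigmae\sigmao$, the orders of $\sigmae$ and $\sigmao$ are coprime; one factor plays the role of $\tau$ and the other of $\kappa$. The hypotheses of Example~\ref{exa:jordan} then reduce to checking: quasi-reality of both $\oc^G_\sigma$ and of the smaller class $\oc^K_\tau$ inside $K=C_G(\kappa)$ (which is a direct product of wreath-product factors as in~\eqref{genofcent}), that $M\nmid j-1$ for a suitable quasi-real exponent $j$, and the existence of $r_0,s_0$ in the smaller class with $r_0\trid(s_0\trid(r_0\trid s_0))\ne s_0$. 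This last inequality is checked by a direct computation inside one block $T_j=(\Z/j)^{n_j}\rtimes\s_{n_j}$, realized again as an embedded dihedral or affine rack. Inductively, the existence of such $(r_0,s_0)$ inside the smaller class has already been established in the homogeneous step, so the induction closes.

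What remains is a careful catalogue of the borderline low-rank types lying just outside the excluded list: for $\am$ one must separately handle $(1^{n_1},3^{n_3})$ with $n_3\ge 2$, $(1^{n_1},2^{n_2})$ with $n_2\ge 3$, and small mixed types like $(2^2,3^{n_3})$ with $n_3\ge 2$. For these I would exhibit explicit decomposable subracks $Y=R\coprod S$ by conjugating $\sigma$ by a pair of well-chosen elements (typically a suitable $k$-cycle disjoint from the support of one factor of $\sigma$, yielding $R$ and $S$ as the two orbits of $\langle\varphi_r\rangle$ on the pair $\{r,s\}$) and by verifying~\eqref{eqn:hypothesis-subrack} by brute computation in the symmetric group. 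The main obstacle will be bookkeeping: proving the \emph{sharpness} of the list, i.e.\ that no case just outside the excluded list slips through any of the three devices, requires checking that the exceptional configurations $(1^n,2)$, $(2,3)$, $(2^3)$ in (a) and the six alternating types in (b) are precisely those where both the affine-embedding condition~\eqref{eqn:s-condition} and the Jordan-coprime-order condition fail simultaneously; this compatibility will be addressed by a short case inspection at the end.
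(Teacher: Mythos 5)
Your overall architecture (reduce to $\am$, detect type D via affine double racks, the Jordan-decomposition trick of Example~\ref{exa:jordan}, and subrack monotonicity, then sweep up small cases by explicit computation) matches the paper's in spirit, but two of your load-bearing steps fail as stated. First, your treatment of mixed types rests on the claim that the orders of $\sigmae$ and $\sigmao$ are coprime; this is false (e.g.\ for type $(6,3)$ the orders are $6$ and $3$), and even when they are coprime the hypothesis $M\nmid j-1$ of Example~\ref{exa:jordan} is frequently unsatisfiable. Concretely, for type $(2,p)$ with $p\ge 5$ prime you would take $\kappa$ of order $M=2$, but $\sigma^j$ is conjugate to $\sigma$ only for $j$ odd, so every available quasi-real type $j$ has $M\mid j-1$ and the criterion gives nothing. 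These types lie outside the exceptional list and must be proved of type D; the paper handles them by a bespoke argument (Step~\ref{step:2-j>3}: explicit $\sigma_1,\sigma_2$ generating $\Z/2\times\A_j$, in which they are non-conjugate and $(\sigma_1\sigma_2)^2\ne(\sigma_2\sigma_1)^2$), and similarly for $(n,p)$ with both parts odd (Step~\ref{step:nm-odd}). Your plan has no fallback for these cases.

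Second, the homogeneous case $(m)$ with $m\ge 6$ not prime is dismissed in one sentence (``one can embed a nontrivial affine subrack \dots so Example~\ref{exa:dp-tipoD} applies''), but this is where most of the real work lies and no such affine embedding is exhibited. The paper needs three genuinely different arguments: for $m$ even, an explicit pair of $m$-cycles shown to be non-conjugate in the group they generate; for $m$ odd divisible by a square, a homomorphism $\alpha:G\to\Z/h$ separating $\sigma$ from $\tau$; and for $m$ odd with two distinct prime factors, the Jordan trick combined --- when $M=3$ --- with a Jacobi-symbol/Zolotarev argument to locate a power $\sigma^j$ with $j\equiv 2 \bmod 3$ in the same $\am$-class. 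None of this is anticipated in your outline, and the affine-rack device alone does not cover it. (A smaller point: the ``sharpness'' verification you flag as the main obstacle is not needed --- the theorem only asserts that types \emph{outside} the list are of type D, and indeed the paper later shows that some listed types, such as $(13)$ and $(1,7)$, are of type D after all.)
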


\pf We proceed in several steps.

\begin{step}\label{step:uno} (Reduction by juxtaposition). Let $m=p+q$, $\mu\in\s_p$,
$\tau\in\s_q$ and $\sigma=\mu\perp\tau\in\sm$ the juxtaposition.
If $\oc_\mu^p$ is of type D, then $\oc_\sigma^m$ also is. In the
same vein, if $\mu\in\A_p$ and its conjugacy class in $\A_p$ is of
type D, and $\tau\in\A_q$, then the conjugacy class of
$\sigma=\mu\perp\tau\in\A_m$ is of type D.
\end{step}

This is so because the inclusion
$\s_p\times\s_q\hookrightarrow\s_m$ induces an inclusion of racks
$\oc_\mu^p\times\oc_\tau^q\hookrightarrow\oc_\sigma^m$. With this
inclusion, the result follows from
Remark~\ref{obs:simplequotients3}.

This statement can be be rewritten as follows: Let $\mu \in
\mathbb S_p$, with $p\le m$ and $\mu$ is of type
$(1^{h_1},2^{h_2},\dots,p^{h_p})$ with $h_j\le n_j$, $1\le j \le
p$, and let $$\Oc' =
\begin{cases} \text{the conjugacy class of $\mu$ in $\s_p$},
&\text{if } \mu\notin \A_p,\\ \text{the conjugacy class of $\mu$
in $\A_p$}, &\text{if } \mu\in \A_p.
\end{cases}.$$ If $\Oc'$ is of type D, then $\Oc$ is of type D.

\begin{step}\label{step:n-even-mayor4}

If the type of $\sigma$ is $(m)$ with $m\ge 6$ not prime,
then $\oc$ is of type D.
\end{step}

Let $\sigma=(1\;2\;3\;4\cdots m)$. We study different cases.

\medbreak \textit{(i). $m\ge 6$ is even.}

If $m=6$, take $\tau=(1\;2\;5\;6\;3\;4)$. It is straightforward to check that
    $\sigma$ and $\tau$ are not conjugate in $H = \langle\sigma,\tau\rangle$ and that
    $(\sigma\tau)^2\neq(\tau\sigma)^2$.

  If $m > 6$, take $\tau=(1\;3)\trid\sigma=(3\;2\;1\;4\cdots m)$. Then
    $\tau\sigma\tau\sigma(1)=1$ and $\sigma\tau\sigma\tau(1)=7$. Therefore, $(\sigma\tau)^2\neq(\tau\sigma)^2$.
    Let $H=\langle\sigma,\tau\rangle$ be the subgroup of $\s_m$ generated by $\sigma$ and
    $\tau$.

Note that $H=\langle\sigma,\tau\rangle=\langle\sigma,\tau\sigma^{-1}\rangle
        =\langle(1\;2\cdots n),(1\;3)(2\;4)\rangle$.
    It is easy to see that the elements in $H$ can be written as products
    $(\mu_1\times\mu_2)\sigma^i$, where $\mu_1\in\s_{\{1,3,5,\dots,m-1\}}$,
    $\mu_2\in\s_{\{2,4,6,\dots,m\}}$ and the signs $\sgn(\mu_1)=\sgn(\mu_2)$.  Now, if
    $x\in\s_m$ is such that $x\trid\sigma=\tau$, then $x=(1\;3)\sigma^i$ for some $i$, which
    does not belong to $H$. Hence, $\sigma$ and $\tau$ belong to different conjugacy classes
    in $H$.

\medbreak \textit{(ii). $m\ge 5$ odd and divisible by a
non-trivial square number.}

Let $m= h^2k$, with $h\ge 3$. Take then $\sigma=(1\;2\cdots m)$.
    For $1\le i\le hk$, let $r_i=(i\;(hk+i)\;(2hk+i)\cdots( (h-1)hk+i))$,
    and consider $\tau=r_1\trid\sigma$. Then $\sigma$ and $\tau$ are conjugate in $\A_m$, but
    they are not conjugate in $H=\langle \sigma,\tau\rangle$. To see this, notice that
    $\sigma r_1^{-1}\sigma^{-1}=r_2^{-1}$, and then, as in the proof of
    case \textit{(i)},
    \[
    H=\langle \sigma,\tau\sigma^{-1}\rangle = \langle \sigma,r_1r_2^{-1}\rangle
    \]
    Then
    $H\subseteq G:=\langle \sigma,r_1,\dots,r_{hk}\rangle$.
    Actually, since $\sigma^{hk}=r_1r_2\cdots r_{hk}$, $G$ is an extension
    \[
    1\to \langle r_1,\dots,r_{hk}\rangle\simeq (\Z/m)^{hk}
        \to G\to \Z/hk\to 1.
    \]
    Any element in $G$ can be written uniquely as a product
    $r_1^{i_1}\cdots r_{hk}^{i_{hk}}\sigma^j$, where $0\le j<hk$. We can consider then the
    homomorphism $\alpha:G\to \Z/h$, given by
    $\alpha(r_1^{i_1}\cdots r_{hk}^{i_{hk}}\sigma^j)=\omega^{i_1+i_2+\cdots+i_{hk}}$,
    where $\omega$ is a generator of $\Z/h$. This homomorphism is well defined, since
    $\alpha(\sigma^{hk})=\alpha(r_1r_2\cdots r_{hk})=\omega^{hk}=1$.  On the other hand, the
    centralizer of $\sigma$ in $\A_m$ is the subgroup generated by $\sigma$.  Thus, for
    $\sigma$ and $\tau$ to be conjugate in $H$, there should exist an integer $j$ such that
    $r_1\sigma^j\in H$. But it is clear that $H$ is in the kernel of $\alpha$, while
    $\alpha(r_1\sigma^j)=\omega$.

\medbreak \textit{(iii). $m\ge 9$ odd and divisible by at least
two primes.}

Express $\sigma = \tau\kappa$, where $\tau\neq e$ and $\kappa\neq
e$ are powers of $\sigma$, the orders $N$ of $\tau$ and $M$ of
$\kappa$ are coprime and $N>3$ is prime. Note that $m=NM$ and the
type of $\kappa$ is $(M^N)$, hence $K =  C_{\A_m}(\kappa) \simeq
(\Z/M)^N \rtimes \A_{N}$. Let $\widetilde{\oc} = \oc_\sigma^{\am}$
and $\oc = \oc_\tau^{K}$. It is known that $\widetilde{\oc}$ is
quasi-real of type $4$.

We will prove that:
\begin{itemize}
\item $\oc$ is quasi-real of type $4$,
\item there exist $r_0$, $s_0\in \oc$ such that $r_0\trid(s_0\trid(r_0\trid s_0)) \neq s_0$.
\end{itemize}

For the first item, we write $\tau =(v, \alpha)\in (\Z/M)^N
\rtimes \A_{N}$. Since $|\tau|=N$, $(\alpha)_N(v)=0$, where
$(\alpha)_N:=\id+\alpha+\alpha^2+\cdots+\alpha^{N-1}$, and
$|\alpha|=N$, i.~e. a $N$-cycle in $\A_N$, because $N$ is prime.
Let $\beta\in \A_N$ such that $\beta\trid \alpha=\alpha^4$. We
will show that there exists $u\in (\Z/M)^N$ such that
$(u,\beta)\trid (v,\alpha)=(v,\alpha)^4$. The last amounts to
$(\alpha)_4v-\beta v=(\id-\alpha^4)u$ for some $u\in(\Z/M)^N$.
Notice that
$\Imm(\id-\alpha^4)=\ker((\alpha^4)_N)=\ker((\alpha)_N)$. % because $N$ is odd prime.
Thus $(\alpha)_4v-\beta v\in\Imm(\id-\alpha^4)$ if and only if
$(\alpha)_N\beta v=0$. But the last follows from
$(\alpha)_N\beta=\beta(\alpha)_N$.

For the second item, let $r_0 := \tau =(v, \alpha)$. Observe that
$s_0 := (v + (\id-\alpha)(u), \alpha)\in \oc$ for any $u \in
(\Z/M)^N$. Now
\begin{align*}
\big((v, \alpha)(v + (\id - \alpha)(u), \alpha)\big)^2 &= \big((\id + \alpha)(v) + (\alpha-\alpha^2)(u), \alpha^2\big)^2
\\ = \big((\id &+ \alpha^2)(\id + \alpha)(v)  + (\id + \alpha^2)(\alpha-\alpha^2)(u), \alpha^4 \big)
\\
\big((v + (\id - \alpha)(u), \alpha)(v, \alpha)\big)^2 &= ((\id + \alpha)(v) + (\id-\alpha)(u), \alpha^2)^2
\\  = \big((\id &+ \alpha^2)(\id + \alpha)(v)  + (\id + \alpha^2)(\id-\alpha)(u), \alpha^4 \big).
\end{align*}
Then $(r_0s_0)^2 = (s_0r_0)^2$ iff $(\id + \alpha^2)(\id-\alpha)^2(u) = 0$.
Now the order of $\alpha$ is the odd prime $N$, hence there exists some $u$ such
that\footnote{Here one may simply work in a vector space over some quotient field of $\Z/M$.}
$(\id + \alpha^2)(\id-\alpha)^2(u) \neq 0$; thus $(r_0s_0)^2 \neq (s_0r_0)^2$, that is $r_0\trid(s_0\trid(r_0\trid s_0)) \neq s_0$.

Assume that $M>3$.  Now, $\widetilde{\oc}$ is of type $D$ by
Example \ref{exa:jordan}.

Assume that $M=3$. We will see that there exists $j$ such that
$j\equiv 2 \mod 3$ and $\sigma^j\in \widetilde{\oc}$. Let $k$ be
relative prime to $m$ and $\lambda_k:\Z/m\Z\to \Z/m\Z$ the map $i
\mod m\mapsto ki \mod m$. We can think $\lambda_k$ as a
permutation of $\s_m$ in the obvious way. Then
\begin{align}\label{eqn:potsigma}
\lambda_k\trid \sigma=\sigma^k.
\end{align}
Indeed, for all $i$, $1\leq i\leq m$, we have
\begin{align*}
(\lambda_k\trid \sigma)(i)&=\lambda_k \sigma
\lambda_k^{-1}(i)=\lambda_k \sigma(k^{-1}i \mod m)=\lambda_k
(k^{-1}i+1 \mod m)\\&=k(k^{-1}i+1) \mod m=i+k \mod m=\sigma^k(i).
\end{align*}

\begin{claim}\label{claim:equivalencias}
The following are equivalent:\\
\centerline{$(a)$ $\sigma^k\in\widetilde{\oc}$, \qquad $(b)$
$\sgn(\lambda_k)=1$, \qquad $(c)$ $J(k,m)=1$,} where $J(k,m)$
means the Jacobi symbol of $k \mod m$.
%\begin{itemize}
%    \item[(a)] $\sigma^j\in\oc_{\sigma}^{\A_m}$,
%    \item[(b)] $\sgn(\lambda_j)=1$, %(i.~e. is an even permutation),
%    \item[(c)] $J(j,m)=1$.
%\end{itemize}
\end{claim}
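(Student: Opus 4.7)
The plan is to establish the two equivalences (a)$\Leftrightarrow$(b) and (b)$\Leftrightarrow$(c) separately. Throughout, recall that $m=3N$ is odd, so $\sigma=(1\;2\cdots m)$, being an $m$-cycle of odd length, is an even permutation; also, the centralizer $C_{\s_m}(\sigma)=\langle\sigma\rangle$ is contained in $\A_m$.

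For (a)$\Leftrightarrow$(b), I would use \eqref{eqn:potsigma}, which says that $\lambda_k$ itself realizes the $\s_m$-conjugation taking $\sigma$ to $\sigma^k$. Therefore every element of $\s_m$ conjugating $\sigma$ to $\sigma^k$ has the form $\lambda_k\sigma^i$ for some $i$, and all such elements share the same sign as $\lambda_k$ because $\sigma$ is even. Hence $\sigma^k$ is conjugate to $\sigma$ in $\A_m$ (i.e.\ lies in $\widetilde{\oc}$) if and only if $\lambda_k\in\A_m$, that is, $\sgn(\lambda_k)=1$.

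For (b)$\Leftrightarrow$(c), I would invoke Zolotarev's lemma: for an odd integer $m>1$ and $k$ coprime to $m$, the signature of the permutation of $\Z/m\Z$ given by multiplication by $k$ equals the Jacobi symbol $J(k,m)$. Our $\lambda_k$ is precisely this multiplication map, so $\sgn(\lambda_k)=J(k,m)$, and the equivalence follows. The main (and only) subtlety is to cite Zolotarev's lemma in the odd-composite setting $m=3N$ where it applies verbatim; no obstacle is expected beyond bookkeeping.
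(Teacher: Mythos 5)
Your proposal is correct and follows the same route as the paper: (a)$\Leftrightarrow$(b) via \eqref{eqn:potsigma} together with the fact that $C_{\s_m}(\sigma)=\langle\sigma\rangle\subset\A_m$, and (b)$\Leftrightarrow$(c) via the Zolotarev--Frobenius result for odd composite moduli, which is exactly the content of the cited \cite[Th.~1]{Sz}. You merely spell out the conjugator-coset argument that the paper leaves implicit.
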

\pf (a) is equivalent to (b) by \eqref{eqn:potsigma} whereas
\cite[Th. 1]{Sz} says that (b) is equivalent to (c). This last
equivalence is Zolotarev's Lemma when $m>2$ is prime -- see
\cite{Z}.\epf

Let $k\in \Z$ such that $k$ is not a quadratic residue modulo $N$;
it is well-known that there are $\frac{N-1}{2}$ of such $k$'s with
$1\leq k\leq N-1$. By Chinese Remainder Theorem there exists $j$,
with $0\leq j< m$, such that $j \equiv 2 \mod 3$ and $j \equiv k
\mod N$.
%\begin{align*}
%\begin{cases}
%j \equiv 2 \mod 3,\\
%j \equiv k \mod N.
%\end{cases}
%\end{align*}
Thus $J(j,3)=-1=J(j,N)$, and $J(j,m)=J(j,3)J(j,N)=1$. Hence,
$j\equiv 2 \mod 3$ and $\sigma^j\in \widetilde{\oc}$ as desired.

Now, $\tau^j=((\alpha)_jv,\alpha^j)\not\in \oc$ because
$\alpha^j\not\in \oc_{\alpha}^{\A_N}$. So, if we identify $\alpha$
with the $N$-cycle $(1\, 2\, \cdots \, N)$ in $\A_N$ and
$\widetilde{\alpha}:=(1 \, 3)\trid \alpha$, then $\alpha^j$ and
$\widetilde{\alpha}$ are conjugate in $\A_N$ and
$(\alpha\widetilde{\alpha})^2\neq (\widetilde{\alpha}\alpha)^2$.
%Here we use the following fact: if $k\geq 5$ and $k\neq 6$, then
%the $k$-cycles $\sigma:=(1\, 2\, \cdots \, k)$ and
%$\widetilde{\sigma}:=(1\, 3)\trid \sigma=(3\, 2\, 1\, 4\, 5\cdots
%\, k)$ satisfies $(\sigma\widetilde{\sigma})^2\neq
%(\widetilde{\sigma}\sigma)^2$. That follows, for instance,
%evaluating in 1. Remark: if $k=6$, then $(\sigma\widetilde{\sigma})^2=\id=
%(\widetilde{\sigma}\sigma)^2$.
Set $r_0=\tau$, $s_0=(v,\widetilde{\alpha})$, $R=\oc \cdot
\kappa$, $S=\oc_{\tau^j}^K \cdot \kappa^j=\oc_{\tau^j}^K \cdot
\kappa^{-1}$. Then $s_0\in\oc_{\tau^j}^K$, $(r_0s_0)^2\neq
(s_0r_0)^2$, $R \bigsqcup S$ is an indecomposable subrack of
$\widetilde{\oc}$ and $(rs)^2\neq (sr)^2$, with
$r=r_0\kappa=\sigma$ and $s=s_0\kappa^{-1}$. Therefore,
$\widetilde{\oc}$ is of type $D$.

\begin{step}\label{step:nm-odd}
The class of type $(n,p)$ in $\A_{n+p}$ is of type D if both $n$
and $p$ are  odd, $n\ge 3$ and $p\ge 5$.
\end{step}

We take $\sigma_1=(1\;2\cdots n)(n+1\;n+2\cdots n+p)$,
    $\sigma_2=(1\;2)(n+1\;n+3)\trid\sigma_1$. Then the subgroup
    $H=\langle\sigma_1,\sigma_2\rangle\subseteq\A_n\times\A_p$. Let
    $\pi:\A_n\times\A_p\to\s_p$ be the projection to the second component, and notice that
    $\pi(\sigma_1),\pi(\sigma_2)$ belong to different conjugacy classes in $\A_p$. Then, $(\sigma_1\sigma_2)^2\neq(\sigma_2\sigma_1)^2$ and they
    are not conjugate in $H$, since both statements hold in $\pi(H)$.

\begin{step}\label{step:dos} If the type of $\sigma$ is $(1^2, j)$ with $j > 5$ odd,
then $\oc$ is of type D. \end{step}

The class $\Oc_j^{\s_j}$ splits as a union $\Oc_1 \coprod \Oc_2$
of 2 classes in $\A_j$; if $R = \Oc_1 \times \{(j + 1)(j+2)\}$, $S
= \Oc_2 \times \{(j + 1)(j+2)\}$, then $Y = R\coprod S$ is a
subrack of $\Oc$ and satisfies \eqref{eqn:hypothesis-subrack}
since it generates the subgroup $\A_j$.

\begin{step}\label{step:previo} If the type of $\sigma$ is as in Table
\ref{tab:classes-d-alt-lit}, then $\oc$ is of type D.
\end{step}

This follows from previous work as explained in Table
\ref{tab:classes-d-alt-lit}.
\begin{table}

    \begin{tabular}[hc]{|c|c|c|}
        \hline type & subrack & reference \\ \hline
        $(1, 2, \so)$, $\so\neq \id$ & $\D_3^{(2)}$ & \cite[Ex. 3.9]{AF3} \\
        $(2^3,\so)$, $\so\neq \id$ & $\D_3^{(2)}$ & \cite[Ex. 3.12]{AF3} \\
        $(4, \so)$, $\so\neq \id$ & $\oct^{(2)}$ &  \cite[Prop. 3.7]{afz} \\
        $(4^2)$ & $\oct^{(2)}$ &  \cite[Proof of Prop. 3.5]{afz}\\
        \hline
    \end{tabular}

\

    \caption{Some classes of type D from the literature}
    \label{tab:classes-d-alt-lit}
\end{table}

\begin{step}\label{step:2-j>3} If the type of $\sigma$ is $(2,j)$, where $j>3$ is odd,
then $\oc$ is of type D. \end{step}

Choose $\sigma = (1\, 2)(3\, 4\, 5\, 6\, \cdots\,j+2)$. Set
$\sigma_1=\sigma$, $h=(3\, 5)$ and $\sigma_2= h\trid \sigma$.
Then, $\sigma_2=(1\, 2)(5\, 4\, 3 \, 6 \,\cdots\,j+2)$. We claim
\begin{itemize}
    \item[(a)] $(\sigma_1 \sigma_2)^2 \neq (\sigma_2 \sigma_1)^2$.
    \item[(b)]  $H := \langle\sigma_1,\sigma_2\rangle\simeq\Z/2 \times \A_j$.
    \item[(c)] The conjugacy classes of $\sigma_1$ and $\sigma_2$ in $H$ are
distinct.
\end{itemize}

(a) follows since $(\sigma_2 \sigma_1)^2(3)=3$, whereas $(\sigma_1
\sigma_2)^2(3)=4$ if $j=5$ and $(\sigma_1 \sigma_2)^2(3)=9$ if
$j\geq 7$. (b) follows because the $j$-cycles $(3\, 4\, 5\, 6\,
\dots\,j+2)$ and $(5\, 4\, 3\, 6\, \dots\,j+2)$ generate $\A_I$,
$I=\{3,4,5,\dots,j+2\}$. (c) follows since the conjugacy class of
one cycle of odd length splits into two classes in $\A_I$.

\begin{step}\label{step:2-3^2} If the type of $\sigma$ is $(2,3^2)$,
then $\oc$ is of type D. \end{step}

The set formed by $\sigma_{1}
=(1\,2\,3)(4\,5\,6)(7\,8)$, $\sigma_{2}=(1\,6\,3)(2\,4\,5)(7\,8)$,
$\sigma_{3}=(1\,6\,4)(2\,3\,5)(7\,8)$ and
$\sigma_{4}=(1\,2\,4)(3\,5\,6)(7\,8)$, is the affine rack
associated to the tetrahedron, \emph{i. e.}
$\Q^1_{\F_{\hspace{-2pt}2}^{\, 2},T}$ with $\vert T \vert = 3$.
The Step follows from Ex. \ref{exa:dp-tipoD}.

\begin{step}\label{step:1-4} If the type of $\sigma$ is $(1,4)$, then $\oc$ is of type D. \end{step}

The group $H = \F_5 \rtimes \F_5^{\times}\simeq \F_5 \rtimes \Z/4$
acts on $\F_5$ by translations and dilations; if we identify $\{1,
\dots, 5\}$ with $\F_5$, then the translation by 1 is the 5-cycle
$(1\,2\,3\,4\,5)$,  the dilation by 2 is $\sigma$ and $H$ is
isomorphic to a subgroup of $\sco$. Thus $\oc$ contains a subrack isomorphic to
$\Q^1_{\F_{\hspace{-1pt}5},T}$ with $\vert T \vert = 4$.
The Step follows from Ex. \ref{exa:dp-tipoD}.

\begin{step}\label{step:2-4} If the type of $\sigma$ is $(2,4)$, then $\oc$ is of type D. \end{step}

Let $H$ be the subgroup of $\as$ generated by $(1\,3\,6)$,
$(2\,4\,5)$ and $\sigma$. Since $(1\,3\,6)$ and $(2\,4\,5)$ span a
subgroup isomorphic to $\F_3^2$, $\sigma \trid (1\,3\,6) =
(2\,4\,5)$ and $\sigma \trid (2\,4\,5) = (1\,6\,3)$, we conclude
that $H$ is isomorphic to $\F_3^2\rtimes \langle T\rangle$, where
$T^2 = -\id$. Thus $\oc$ contains a subrack isomorphic to
$\Q^1_{\F_{\hspace{-2pt}3}^{\, 2},T}$ with $\vert T \vert = 4$.
The Step follows from Ex. \ref{exa:dp-tipoD}.

\begin{step}\label{step:small} If the type of $\sigma$ is as in Table \ref{tab:classes-d-sym},
then $\oc$ is of type D.
\end{step}

We list $\sigma_1$, $\sigma_2$ and
$H=\langle\sigma_{1},\sigma_{2}\rangle$ in
Table~\ref{tab:classes-d-sym}; a straightforward computation shows
that $(\sigma_1 \sigma_2)^2 \neq (\sigma_2 \sigma_1)^2$ and that
the conjugacy classes of $\sigma_1$ and $\sigma_2$ in $H$ are
distinct.

\begin{table}
    \centering
    \begin{tabular}[h]{|c|l|l|c|}
        \hline $\Oc$ & $\sigma_1$& $\sigma_2$ & $H=\langle\sigma_{1},\sigma_{2}\rangle$\\ \hline
        $(1^3,2^2)$& (4\;5)(6\;7) & $(1\;2)(3\;7)$ & $\mathbb{D}_{6}$\\
                $(1,3^2)$& $(2\;3\;4)(5\;6\;7)$ & $(1\;2\;5)(3\;4\;6)$ &  $\Z/7 \rtimes \Z/3$ \\
        $(3^3)$& $(1\;2\;3)(4\;5\;6)(7\;8\;9)$ & $(1\;2\;4)(3\;5\;6)(7\;9\;8)$ & $\A_4 \times \Z/3$ \\
        $(2^5)$&  $(1\,2)(3\,4)(5\,6)(7\,8)(9\,10)$ & $(1\,3)(2\,4)(5\,7)(6\,9)(8\,10)$& $\mathbb{D}_{6}$  \\
        $(1,2^3)$ & $(2\;3)(4\;5)(6\;7)$ & $(1\,6)(2\,4)(3\,5)$ &   $\mathbb{D}_{6}$  \\
        \hline
    \end{tabular}

\

    \caption{Some classes of type D in $\am$ or $\s_m$}
    \label{tab:classes-d-sym}
\end{table}

\medbreak\noindent{\bf Final Step.}
 Assume now that $\sigma$ is not of type D; we apply systematically Step \ref{step:uno}. By Step
 \ref{step:n-even-mayor4}, $n_j = 0$ if $j\ge 6$ is not prime.

Assume that $n_4 \neq 0$; then $\sigma$ is of type $(1^{n_1},
2^{n_2}, 4)$ by Step \ref{step:previo}. But $n_1 = 0$ by Step
\ref{step:1-4} and $n_2 = 0$ by Step \ref{step:2-4}; a
contradiction since $m\ge 5$. Hence, $n_4 = 0$.

Assume that $n_3 \neq 0$; then $\sigma$ is of type $(1^{n_1},
2^{n_2},3^{n_3})$ by Step \ref{step:nm-odd}, and either $n_1 = 0$
or else $n_2 = 0$ by Step \ref{step:previo}. If $n_1 = 0$, then
$n_3 = 1$ by Step \ref{step:2-3^2} and $n_2 \leq 2$ by Step
\ref{step:previo}; in other words, only types $(2,3)$ and
$(2^2,3)$ remain. If $n_2 = 0$, then type $(3^2)$ remains, by Step
\ref{step:small}. Also, if $n_1\neq 0$ and $n_2=0$, then type
$(1^{n_1},3)$ remains, by Step \ref{step:small}.

Assume next that $n_3 = 0$; then $\sigma$ is of type $(1^{n_1},
2^{n_2},j^{n_j})$ by Step \ref{step:nm-odd}, with $j \ge 5$ prime
and $n_j = 0$ or 1. Furthermore, $n_2\le 4$ by Step
\ref{step:small}.

If $n_j = 0$ and $n_1 \neq 0$, then $n_2 \le 2$ by Step
\ref{step:small}; but $n_2 = 2$ implies $n_1 \le 2$ by Step
\ref{step:small}. In other words, only types $(2^3)$, $(2^4)$,
$(1, 2^2)$, $(1^2, 2^2)$ and $(1^n,2)$ remain.

If $n_j = 1$, then either $n_1 = 0$ or else $n_2 = 0$ by Step
\ref{step:previo}. The possibility $n_2 \neq 0$ is excluded by
Step \ref{step:2-j>3}. Thus $n_2 = 0$, hence $n_1 \le 1$ by Step
\ref{step:dos}; thus, only types $(j)$ and $(1, j)$, with $j \ge
5$ prime, remain. \epf

\begin{obs}\label{obs:nosubracks}
    The remaining conjugacy classes do not have enough bad subracks to arrive to similar
    conclusions, except $(p)$ and $(1,p)$ where we do not have enough information yet, as said.
    We list the proper subracks generated by two elements in these classes:

    \renewcommand{\theenumi}{\alph{enumi}}\renewcommand{\labelenumi}{(\theenumi)}
    \begin{enumerate}
        \item The proper subracks of the class of type $(1^{m-2},2)$ in $\sm$ are
            all of the form $\Oc_2^{m_1} \coprod \Oc_2^{m_2} \coprod \cdots \coprod
            \Oc_2^{m_s}$; $\Oc_2^{m_i}$ commutes with $\Oc_2^{m_j}$ if $i\neq j$.

        \item The proper subracks generated by two elements
        of the class of type $(1^n,3)$ are either abelian, or isomorphic to the
            racks of the vertices of a tetrahedron, or a cube, or a
            dodecahedron. The same for the class of type $(3^2)$.

        \item The proper subracks of the class of type $(2,3)$ are abelian with either one or two elements.

        \item The proper subracks of the class of type $(1,2^2)$  are
        abelian racks and dihedral racks with $3$ and $5$ elements.

        \item The proper subracks generated by two elements of the class of type  $(1^2,2^2)$
        are abelian racks and dihedral racks with $3$, $4$ and $5$ elements.

          \item The proper subracks generated by two elements of the class of type  $(2^4)$
          are abelian racks and dihedral racks with $3$ and $4$
            elements.

        \item The proper subracks generated by two elements of the class of type $(2^2,3)$  are either abelian racks
        or indecomposable with $20$ elements.
    \end{enumerate}
\end{obs}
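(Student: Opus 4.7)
The basic principle for all parts is that the subrack of $\Oc$ generated by a family $\{\sigma_{1}, \ldots, \sigma_{k}\}$ coincides with $\bigcup_{i} \bigl(\Oc \cap \oc_{\sigma_{i}}^{H}\bigr)$, where $H = \langle \sigma_{1}, \ldots, \sigma_{k} \rangle$. Hence classifying two-generated (resp.\ arbitrary) subracks of $\Oc$ amounts to listing $G$-orbits on $\Oc \times \Oc$ (resp.\ on finite subsets of $\Oc$) under simultaneous conjugation and, for each representative, identifying $H$ together with the $H$-orbits of the generators. Since the combined supports involved are bounded in each listed case, this reduces to a finite case-analysis inside the pertinent small symmetric group.

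For (a), two transpositions commute iff their supports are disjoint, and generate $\s_{3}$ otherwise; closing under conjugation by the generated subgroup produces the full transposition class on the union of supports. Iterating yields that the subrack generated by any family of transpositions is the disjoint union, indexed by the connected components of the graph on $\{1,\dots,m\}$ with edges the supports, of the full transposition class on each component; distinct components commute because their support sets are disjoint. This is a proper subrack precisely when the graph is disconnected.

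For the involution-heavy cases (c), (d), (e), (f), any two distinct non-commuting involutions generate a dihedral group $\mathbb{D}_{n}$ with $n = |\sigma_{1}\sigma_{2}|$, and the generated subrack is the dihedral rack $\D_{n}$ (abelian if they commute). A direct enumeration of the cycle types of the product $\sigma_{1}\sigma_{2}$ arising from factors of type $(2,3)$, $(1,2^{2})$, $(1^{2},2^{2})$ and $(2^{4})$ inside $\s_{5}$, $\s_{5}$, $\s_{6}$ and $\s_{8}$ respectively, and discarding those orders $n$ for which $\D_{n}$ cannot be realized inside $\Oc$ for support-size or parity reasons, produces exactly the abelian and dihedral options listed.

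For the 3-cycle cases (b), the classical theorem that a subgroup of a symmetric group generated by two elements of order three is isomorphic to one of cyclic, $\Z/3 \times \Z/3$, $\mathbb{A}_{4}$, $\s_{4}$, or $\mathbb{A}_{5}$ identifies the non-abelian two-generated subracks of 3-cycles with the rack of 3-cycles in $\mathbb{A}_{4}$ (the tetrahedron, one $\mathbb{A}_{4}$-class of $4$ elements), in $\s_{4}$ (the $8$ three-cycles, forming the cube rack), and in $\mathbb{A}_{5}$ (the $20$ three-cycles, the dodecahedron rack). The main obstacle is case (g): I must exhibit an explicit pair of $(2^{2},3)$-elements in $\s_{7}$ whose generated subgroup $H$ induces a single $20$-element orbit of such elements, prove its indecomposability by showing $H$ acts transitively on the orbit without stabilizing any proper sub-orbit closed under $\trid$, and exclude all other non-abelian two-generated options. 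Both this enumeration and the polyhedral identifications in (b) and the dihedral order list in (c)--(f) are most reliably carried out with the rack-theoretic software of \cite{GV}.
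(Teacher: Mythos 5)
The first thing to say is that the paper offers no proof of this Remark at all: it is stated as a bare observation, with the underlying finite checks evidently delegated to the computer tools \cite{GAP, GV} acknowledged elsewhere, so your proposal can only be judged on its own terms. Your general principle is correct and worth making explicit --- the subrack of $\Oc$ generated by $\sigma_1,\dots,\sigma_k$ is $\bigcup_i \oc_{\sigma_i}^H$ with $H=\langle\sigma_1,\dots,\sigma_k\rangle$, since any subrack is stable under conjugation by (hence also by inverses of) its own elements --- and your support-graph argument settles (a) completely. The dihedral analysis is also the right idea for (d), (e), (f), where the class really does consist of involutions; there the job reduces to listing the possible orders of $\sigma_1\sigma_2$ inside $\A_5$, $\A_6$, $\A_8$, which you sketch but do not carry out.

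There are, however, two genuine errors. First, you file case (c) under the ``involution-heavy'' cases, but an element of type $(2,3)$ has order $6$, not $2$, so two such elements do not generate a dihedral group and the reflection-orbit analysis is inapplicable. What actually has to be shown for (c) is that the centralizer of $\sigma=(1\,2)(3\,4\,5)$ meets the class only in $\{\sigma,\sigma^{-1}\}$ (giving the abelian subracks with at most two elements) and that any non-commuting pair generates a subgroup whose orbits exhaust the $20$-element class --- e.g.\ $(1\,2)(3\,4\,5)$ and $(1\,3)(2\,4\,5)$ generate all of $\s_5$, since their cubes and fourth powers give $(1\,2)$, $(1\,3)$, $(3\,4\,5)$, $(2\,4\,5)$. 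Second, the ``classical theorem'' invoked in (b) is false as stated: two elements of order three in a symmetric group can generate groups not on your list (for instance $(1\,2\,3)(4\,5\,6)$ and $(1\,2\,3)(4\,5\,7)$ generate $\Z/3\times\A_4$), and $\s_4$ is never generated by two $3$-cycles --- the cube rack arises as the union of the two $\A_4$-orbits of $3$-cycles when the generators lie in different $\A_4$-classes, not from an $\s_4$. The correct elementary statement concerns genuine $3$-cycles only (the generated group is $\Z/3$, $\Z/3\times\Z/3$, $\A_4$ or $\A_5$ according to the size of the common support), and it gives no handle on the class $(3^2)$ in $\A_6$, whose fixed-point-free order-$3$ elements sit inside transitive copies of $\A_4$, $\s_4$ and $\A_5$ and require a separate maximal-subgroup analysis. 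Together with the admitted reliance on software for (g), the proposal is a plan for a machine verification with two misidentified cases, rather than a proof.
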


We were not able to find out in general whether or not the classes
$(1,p)$, $(p)$ with $p$ prime, are of type D. For instance, the
classes $(5)$, $(7)$, $(11)$ are not of type D, while the classes
$(13)$, $(17)$ and $(31)$ are of type D.

\medbreak Also, the classes $(1,5)$, $(1,11)$ are not of type D,
while the class $(1,7)$ is of type D. More generally, if $p = 2^h
- 1$ is a Mersenne prime, then $(1,p)$ is of type D. For, set
$q=2^h$; then, the group $H = \F_{\hspace{-2pt}q} \rtimes
\F_{\hspace{-2pt}q}^{\times}\simeq \F_{\hspace{-2pt}q} \rtimes \Z/p$ acts on $\F_q$ by
translations and dilations; if we identify $\{1, \dots, q\}$ with
$\F_q$, then $H$ is isomorphic to a subgroup of $\s_q$.

\medbreak\subsection{An abelian subrack with 3
elements}\label{subsection:triangulito} To deal with the remaining
cases, we apply techniques of abelian subracks. We begin by
recording a result that is needed in Lemma~\ref{lema:a4xcr}.
\begin{lema}\label{lem:triangulitos}
Let $G$ be a finite group, $\Oc$ be the conjugacy class of
$\sigma_1$ in $G$ and $(\rho,V) \in \widehat{C_{G}(\sigma_1)}$.
Let $\sigma_2 \neq\sigma_3\in\Oc - \{\sigma_1\}$; let $g_1 = e$,
$g_2$, $g_3\in G$ such that
$\sigma_{i}=g_{i}\sigma_{1}g_{i}^{-1}$, for all $i$. Assume that
\begin{itemize}
\item $\sigma_1^{h}=\sigma_2\sigma_3$ for an odd integer $h$,
\item $g_{3}g_{2}$ and $g_{2}g_{3}$ belong to $C_{G}(\sigma_1)$, and
\item  $\sigma_i\sigma_j=\sigma_j\sigma_i$, $1\leq i,j\leq 3$.
\end{itemize}

Then $\dim\toba(\Oc,\rho)=\infty$, for any
$\rho\in\widehat{C_{G}(\sigma_1)}$.
\end{lema}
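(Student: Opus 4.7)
The plan is to combine the three commuting elements $\sigma_1,\sigma_2,\sigma_3\in\Oc$, the relation $\sigma_1^{h}=\sigma_2\sigma_3$, and the centralizer hypotheses on $g_2g_3,g_3g_2$ to produce a rank-three diagonal braided subspace of $M(\Oc,\rho)$ whose braiding matrix is incompatible with Heckenberger's classification of finite-dimensional Nichols algebras of diagonal type, precisely because $h$ is odd. I would argue by contradiction, assuming $\dim\toba(\Oc,\rho)<\infty$. Since $\rho\in\widehat{C_G(\sigma_1)}$ and $\sigma_1$ is central in $C_G(\sigma_1)$, Schur's lemma gives $\rho(\sigma_1)=q\cdot\id$ with $q=q_{\sigma_1\sigma_1}$; finite-dimensionality of $\toba(\Oc,\rho)$ forces $q\ne 1$ to be a root of unity. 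The operators $\rho(\sigma_1),\rho(\sigma_2),\rho(\sigma_3)$ commute and are semisimple, so I would fix a common eigenvector $v\in V$ with $\rho(\sigma_i)v=\lambda_iv$, giving $\lambda_1=q$ and $\lambda_2\lambda_3=q^{h}$.

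Next I set $e_i:=g_i\otimes v\in M_{\sigma_i}\subset M(\Oc,\rho)$ and verify that $\ku\{e_1,e_2,e_3\}$ is a braided subspace of diagonal type. From $g_2g_3,g_3g_2\in C_G(\sigma_1)$ one obtains $g_2^{-1}\sigma_1g_2=\sigma_3$ and $g_3^{-1}\sigma_1g_3=\sigma_2$; combining these with $\sigma_2=\sigma_1^{h}\sigma_3^{-1}$ I would then derive
\[
g_3^{-1}\sigma_2g_3=\sigma_2^{h}\sigma_1^{-1},\qquad g_2^{-1}\sigma_3g_2=\sigma_3^{h}\sigma_1^{-1}.
\]
Every $g_j^{-1}\sigma_ig_j$ thus lies in the abelian subgroup $\langle\sigma_1,\sigma_2,\sigma_3\rangle$, on which $v$ is a simultaneous eigenvector, so $\sigma_i\cdot e_j=q_{ij}e_j$ with
\[
q_{ii}=q,\qquad q_{12}q_{21}=q_{13}q_{31}=q^{h},\qquad q_{23}q_{32}=q^{-2}(\lambda_2\lambda_3)^{h}=q^{h^{2}-2}.
\]
Since $\toba(\ku\{e_1,e_2,e_3\})\hookrightarrow\toba(\Oc,\rho)$ by \cite[Cor. 2.3]{AS-cambr}, this rank-three diagonal Nichols algebra is forced to be finite-dimensional as well.

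The final step will be to invoke Heckenberger's classification. The rank-two subdiagram on $\{e_1,e_2\}$ has both vertices labeled $q$, so $q_{12}q_{21}\in\{1,q^{-1}\}$, i.e.\ $q^{h}=1$ or $q^{h+1}=1$. If $q^{h}=1$, then $q_{23}q_{32}=q^{-2}$ and the analogous constraint on $\{e_2,e_3\}$ forces $q^{2}=1$, hence $q=-1$; but then $q^{h}=(-1)^{h}=-1$ by the oddness of $h$, a contradiction. If $q^{h+1}=1$, then $q^{h}=q^{-1}$ and $q_{23}q_{32}=q^{-1}$, so the three rank-two off-diagonal products all equal $q^{-1}$; the resulting generalized Dynkin diagram is a three-cycle with vertices labeled $q$ and edges labeled $q^{-1}$, of affine $\widetilde{A}_{2}$ shape, which is absent from Heckenberger's list of finite-type rank-three diagonal diagrams. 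The main obstacle is expected to be the second step: rewriting $g_3^{-1}\sigma_2g_3$ and $g_2^{-1}\sigma_3g_2$ inside $\langle\sigma_1,\sigma_2,\sigma_3\rangle$ requires the joint use of both centralizer hypotheses and the relation $\sigma_1^{h}=\sigma_2\sigma_3$, and it is exactly the odd parity of $h$ that ultimately rules out both alternatives in the classification argument.
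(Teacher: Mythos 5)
Your construction coincides with the paper's proof almost verbatim: the identities $g_2^{-1}\sigma_1g_2=\sigma_3$, $g_3^{-1}\sigma_1g_3=\sigma_2$, $g_3^{-1}\sigma_2g_3=\sigma_2^{h}\sigma_1^{-1}$, $g_2^{-1}\sigma_3g_2=\sigma_3^{h}\sigma_1^{-1}$ are exactly the entries of the matrix $\gamma=(g_j^{-1}\sigma_ig_j)$ displayed in the paper, and the resulting rank-three diagonal braided subspace, with all vertices labelled $q=\lambda_1$ and edge products $q^{h},q^{h},q^{h^2-2}$, is precisely the paper's Figure~\ref{fi:triangulito}. Up to that point your argument is correct and complete; the step you flagged as the likely obstacle (rewriting the conjugates inside $\langle\sigma_1,\sigma_2,\sigma_3\rangle$) is in fact fine.

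The gap is in the final appeal to the classification. The claim that a finite-type rank-two diagram with both vertices labelled $q$ must have $q_{12}q_{21}\in\{1,q^{-1}\}$ is false: the diagram with both vertices $-1$ and edge label an arbitrary root of unity $p\neq 1$ is of finite type (it is Weyl-equivalent to the super-$A_2$ diagram with vertices $p,-1$ and edge $p^{-1}$), and Heckenberger's Table~1 contains further symmetric entries at small roots of unity. Hence your dichotomy ``$q^{h}=1$ or $q^{h+1}=1$'' is not justified as stated, and the same unproved implication is reused on $\{e_2,e_3\}$ inside the first case. It happens that the omission does not overturn the conclusion --- for $q=-1$ and $h$ odd one has $q^{h}=q^{-1}$, so that family falls into your second case anyway --- but your proof neither observes this nor excludes the remaining symmetric rank-two diagrams when $q^{h}=1$ (where $q$ necessarily has odd order). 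The paper sidesteps all of this: when $\lambda_1^{h}=1$ it notes that $\{g_2w,g_3w\}$ is of Cartan type with affine matrix $A_1^{(1)}$, hence infinite-dimensional, and otherwise it invokes the rank-three classification (Table~2 of \cite{H1}) for the connected triangle, which is compatible with finite dimension only if $\lambda_1=-1$ and $h$ is even, contradicting the hypothesis. To make your version airtight, either quote the complete list of symmetric rank-two diagrams and check each, or replace the rank-two reduction by the rank-three triangle criterion as the paper does.
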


\pf Since $\sigma_i\sigma_j=\sigma_j\sigma_i$, there exist $w\in
V-0$ and $\lambda_i\in\ku$ such that
$\rho(\sigma_i)(w)=\lambda_{i}w$ for $i=1,2,3$. For any $1\leq
i,j\leq3$, we call $\gamma_{ij}=g_{j}^{-1}\sigma_i g_{j}$. It is
easy to see that $\gamma_{ij}\in C_{G}(\sigma_1)$ and that
\begin{align*}
\gamma=(\gamma_{ij})=\left(\begin{array}{ccc}
    \sigma_{1} & \sigma_{3} & \sigma_{2}\\
    \sigma_{2} & \sigma_{1} & \sigma_{2}^{h}\sigma_{1}^{-1}\\
    \sigma_{3} & \sigma_{3}^{h}\sigma_{1}^{-1} & \sigma_{1}\end{array}\right).
\end{align*}
Then, $W=\text{span}\{g_{1} w,g_{2} w,g_{3} w\}$ is a braided
vector subspace of $M(\Oc,\rho)$ of abelian type with Dynkin
diagram given by Figure \ref{fi:triangulito}. Assume that
$\dim\toba(\oc,\rho)$ is finite. Then $\lambda_1\neq 1$; also
$\lambda_1^h\neq 1$, for otherwise $g_{2} w,g_{3} w$ span a
braided vector subspace of Cartan type with Dynkin diagram
$A_1^{(1)}$. Thus, we should have $\lambda_{1}=-1$ and $h$ even,
by \cite[Table 2]{H1}, but this is a contradiction to the
hypothesis on $h$. \epf

\begin{figure}[ht]
    \vspace{1cm}
    \begin{align*}%\label{4cicloprimL>2}
        \setlength{\unitlength}{1.4cm}
        \begin{picture}(1,0)
            \put(0,0){\circle*{.15}} \put(1,1){\circle*{.15}}
            \put(2,0){\circle*{.15}} \put(0,0){\line(1,1){1}}
            \put(0,0){\line(2,0){2}} \put(1,1){\line(1,-1){1}}
            \put(0,.7){$\lambda_1^{h}$}
            \put(1.6,.7){$\lambda_1^{h^2-2}$} \put(0.9,-0.4){$\lambda_1^{h}$}
            \put(-0.4,-0.07){$\lambda_1$} \put(2.2,-0.07){$\lambda_1$}
            \put(0.9,1.2){$\lambda_1$}
        \end{picture}\qquad \qquad
    \end{align*}
\caption{}\label{fi:triangulito}
\end{figure}

\begin{lema}\label{lema:a4xcr}
    \begin{enumerate}
        \item
            Let $r\ge 1$ be odd and let $G=\mathbb{A}_4\times
            \Z/r$, where $\Z/r$ is the cyclic group of order
            $r$, generated by $\tau$.  Let $\Oc$ be the
            conjugacy class of $\sigma= ((1\;2)(3\;4),\tau)$
            in $G$. Then, $\dim\toba{(\Oc,\rho)}=\infty$ for
            every $\rho\in\widehat{C_{G}(\sigma)}$.

        \item\label{prop:2^n2}
            Let $m\ge 5$ and let $\sigma\in\mathbb{A}_{m}$ be
            of type $(1^{n_{1}},2^{n_{2}},\sigmao)$,
            $\mathcal{O}$ the conjugacy class of $\sigma$ in
            $\mathbb{A}_{m}$ and
            $\rho=(\rho,V)\in\widehat{C_{\mathbb{A}_{m}}(\sigma)}$.
            Then $\dim\mathfrak{B}(\mathcal{O},\rho)=\infty$.
    \end{enumerate}
\end{lema}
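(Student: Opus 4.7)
Both parts of the lemma are designed as applications of Lemma~\ref{lem:triangulitos}. For part (i), I would use the three non-identity elements $v_1=(1\;2)(3\;4)$, $v_2=(1\;3)(2\;4)$, $v_3=(1\;4)(2\;3)$ of the Klein four-subgroup $V_4\subset\mathbb{A}_4$ and set $\sigma_i=(v_i,\tau)\in G$. The three lie in $\oc$ (because $V_4-\{e\}$ is a single $\mathbb{A}_4$-conjugacy class) and pairwise commute (common $\tau$-coordinate, $V_4$ abelian). The $3$-cycles $g_2=((1\;3\;2),e)$ and $g_3=((1\;2\;3),e)$ conjugate $\sigma_1$ to $\sigma_2$ and $\sigma_3$ respectively, and $g_2g_3=g_3g_2=e\in C_G(\sigma_1)$. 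Finally $\sigma_2\sigma_3=(v_2v_3,\tau^2)=(v_1,\tau^2)=\sigma_1^{r+2}$, and the exponent $h=r+2$ is odd precisely because $r$ is odd --- the sole use of that hypothesis. Lemma~\ref{lem:triangulitos} then applies.

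For part (ii), the plan is to reduce to (i) via a suitable $\mathbb{A}_4\times\Z/r$ subgroup of $\mathbb{A}_m$ together with Lemma~\ref{lem:subgrupo_general}(i). Parity forces $n_2$ even, and I assume $n_2\ge 2$ (the cases $n_2=0$, namely types $(3^2),(1^n,3),(1,p),(p)$, are outside the scope of this lemma and handled separately). Pick four points supporting two transpositions of $\sigma$, say $\{1,2,3,4\}$, and write $\sigma=(1\;2)(3\;4)\,\mu$ with $\mu$ supported on the complement. When $n_2=2$, $\mu=\sigmao$ has odd order $r$, and $H=\mathbb{A}_{\{1,2,3,4\}}\times\langle\sigmao\rangle\simeq\mathbb{A}_4\times\Z/r$ sits inside $\mathbb{A}_m$; part (i) applied in $H$ together with Lemma~\ref{lem:subgrupo_general}(i) concludes.

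The hard part is $n_2\ge 4$: now $\mu$ has even order and the single-$\mathbb{A}_4$ embedding fails to land inside $\mathbb{A}_4\times\Z/r$ with $r$ odd. I would bypass (i) and apply Lemma~\ref{lem:triangulitos} directly in $\mathbb{A}_m$, performing the Klein four-rotation on all $n_2/2$ pairs of transpositions \emph{simultaneously}. For each pair with support $\{a,b,c,d\}$ use the three involutions $(a\;b)(c\;d),\ (a\;c)(b\;d),\ (a\;d)(b\;c)$, and define $\sigma_j$ ($j=1,2,3$) by selecting the $j$-th involution on every pair and multiplying by $\sigmao$. Then $\sigma_2\sigma_3=\sigma_e\cdot\sigmao^{2}=\sigma_1^{|\sigmao|+2}$, with the exponent odd since $|\sigmao|$ is the lcm of odd cycle lengths. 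The conjugating $g_2,g_3$ are products of the analogous $3$-cycles across all the pairs, arranged as inverses so that $g_2g_3=g_3g_2=e$; pairwise commutativity of the $\sigma_j$, and non-splitting of $\oc$ in $\mathbb{A}_m$ (guaranteed because $n_2\ge 2$ forces a repeated cycle length), are immediate.
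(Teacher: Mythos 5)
Your part (i) coincides with the paper's proof (same three elements $\sigma_i=(v_i,\tau)$, same $g_2$ with $g_3=g_2^{-1}$, same $h=r+2$), and your treatment of part (ii) for $n_2\ge 2$ is the same computation in different packaging: the ``simultaneous Klein rotation on all $n_2/2$ pairs'' is exactly the paper's diagonal embedding $\delta\colon\ac\to(\ac)^{k}$, $k=n_2/2$, which realizes $\ac\times\Z/r$ (with $r=|\sigmao|$ odd) inside $\am$ so that the class of $((1\;2)(3\;4),\tau)$ maps into $\oc$; the paper then quotes part (i) together with Lemma~\ref{lem:subgrupo_general}, whereas you rerun Lemma~\ref{lem:triangulitos} directly in $\am$. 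Both are valid, and in your direct version you do not even need the non-splitting of $\oc$: your $g_2,g_3$ are products of $3$-cycles, hence lie in $\am$, so $\sigma_2,\sigma_3\in\oc$ automatically.

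The one genuine gap is your exclusion of $n_2=0$. Part (ii) as stated does cover $n_2=0$, i.e.\ all types $(1^{n_1},\sigmao)$ --- not only $(3^2)$, $(1^n,3)$, $(1,p)$, $(p)$, but also, say, $(1,3,5)$ --- and there the Klein-four construction has nothing to act on. The paper disposes of this case at the outset by citing \cite[Th.~2.3]{AF2}; to prove the lemma as stated you must include that citation (or an independent argument) rather than declare the case out of scope.
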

\begin{proof}
    For the first part, apply Lemma \ref{lem:triangulitos} with $\sigma_1=
    ((1\;2)(3\;4),\tau)$, $\sigma_2=  ((1\;3)(2\;4),\tau)$, $\sigma_3=((1\;4)(2\;3),\tau)$,
    $g_1=e$, $g_2=((1\;3\;2),1)$, $g_3=g_2^{-1}$ and $h=r+2$.

    We prove now the second part. Notice that the result follows from \cite[Th.~2.3]{AF2} if
    $n_2=0$. Otherwise, $n_2=2k$ is even and positive. Let $r$ be the order of $\sigma_o$.
    We claim  $\ac\times \Z/r$  embeds into $\A_m$ in such a way that the class of
    $((1\;2)(3\;4),\tau)\in\A_4\times \Z/r$ is mapped into the class of type
    $(1^{n_{1}},2^{n_{2}},\sigmao)$ in $\A_m$. For this, just take $\sigma$ to be of type
    $\sigma_o$, acting on indices $\{n_1+2n_2+1,\ldots,m\}$, and $\alpha:\Z/r\to\A_m$,
    $\alpha(\tau)=\sigma$.  Let $\delta:\ac\to(\ac)^k$ be the diagonal map, and consider
    $(\ac)^k$ as a subgroup of $\A_m$ acting on indices $\{n_1+1,\ldots,n_1+2n_2\}$. Then,
    $\delta\times\alpha:\ac\times \Z/4\to(\ac)^k\times\A_{m-n_1-2n_2}\subseteq\A_m$, is the
    claimed map.
    %Notice that $n_{2}=2k$ is even. Assume first that $\sigma_{o}=e$.
    %For every $l$, $1\leq l \leq k$, we define
    %\begin{gather*}
    %   C_{l}=(4l-3\quad 4l-2)(4l-1\quad 4l),\\
    %   D_{l}=(4l-3\quad 4l-1)(4l-2\quad 4l),\\
    %   \alpha_{l}=(4l-2\quad 4l-1)(4l-3\quad 4l-2)=(4l-1\quad 4l-2\quad 4l-3).
    %\end{gather*}
    %It is easy to see that the group generated by $C_{l}$, $D_{l}$ and
    %$\alpha_{l}$ is isomorphic to $\mathbb{A}_{4}$. Moreover, the
    %group generated by
    %\[
    %C=C_{1}\cdots C_{k},\ D=D_{1}\cdots D_{k}\ \text{and
    %}\alpha=\alpha_{1}\cdots\alpha_{k}
    %\]
    %is also isomorphic to $\mathbb{A}_{4}$ and $C$ is an involution,
    %conjugate to $\sigma$ in $\am$. Then, the Nichols algebra
    %$\toba(\Oc,\rho)$ is infinite dimensional. Now, if $\sigma_{o}\ne
    %e$, as before, we have that $\sigma$ belongs to a subgroup
    %isomorphic to $\mathbb{A}_{4}\times\langle\sigma_{o}\rangle$.
    %Then, the result follows from Lemma~\ref{lema:a4xcr}.
    %%\ref{lem:triangulitos} with $m=|\sigma_{o}|+2$, an odd number.
    \end{proof}

\begin{obs}\label{obs:a4}
    The case $r=1$ of this Lemma is known (see for example \cite[Prop.~2.4]{AF2}) and it is
    used to kill the conjugacy class of involutions in $\mathbb{A}_4$.
\end{obs}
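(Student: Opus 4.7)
The plan is to verify the specialization and connect it explicitly to the cited literature. First, I would unpack what the statement of Lemma~\ref{lema:a4xcr}(1) says when $r = 1$: the cyclic factor $\Z/r$ collapses to the trivial group, so $G = \ac \times \Z/1 \simeq \ac$ and $\tau = e$. Consequently $\sigma = ((1\;2)(3\;4), \tau)$ is identified with the double transposition $(1\;2)(3\;4)\in\ac$, and $\Oc$ is the conjugacy class of involutions of $\ac$, namely $V_4\setminus\{e\}$ where $V_4 = \{e, (1\;2)(3\;4),(1\;3)(2\;4),(1\;4)(2\;3)\}$ is the Klein four-subgroup of $\ac$. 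The centralizer is $C_{\ac}(\sigma) = V_4$ (since $V_4$ is normal and abelian in $\ac$), and $\widehat{C_{\ac}(\sigma)} = \widehat{V_4}$ consists of four linear characters.

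Next I would check that the conclusion $\dim\toba(\Oc,\rho) = \infty$ for all $\rho\in\widehat{V_4}$ is exactly what \cite[Prop.~2.4]{AF2} asserts. That reference handles the involution class of $\ac$ by exhibiting the three commuting elements of $V_4\setminus\{e\}$ as an abelian subrack and reading off the associated Dynkin diagram -- which is precisely the triangle configuration of Lemma~\ref{lem:triangulitos} with $h = 3$ (odd) and $\lambda_1$ constrained by the one-dimensional characters of $V_4$. The argument there discards each of the four characters in turn, matching the four possibilities for $\rho$.

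Finally, I would note that this is not just a formal match but also the historically relevant case: the observation points out that Lemma~\ref{lema:a4xcr}(1) is a genuine extension of the earlier result, not a reproof, and that the auxiliary cyclic factor $\Z/r$ is exactly what lets one climb from the classical $\ac$-case to all the types $(1^{n_1}, 2^{n_2},\sigmao)$ appearing in part~(2). The only thing to be careful about is the parity condition on $h$ in Lemma~\ref{lem:triangulitos}: for $r=1$ one has $h = r+2 = 3$, which is odd, so the hypothesis is satisfied and the specialization is consistent. No further obstacle arises; the remark is essentially a bibliographical acknowledgement that the base case of the induction-like structure in Lemma~\ref{lema:a4xcr} was already available.
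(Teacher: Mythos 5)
Your proposal is correct and follows exactly the intended reading of the remark: setting $r=1$ collapses $G$ to $\ac$, identifies $\Oc$ with the class $V_4\setminus\{e\}$ of involutions with centralizer $V_4$, and the triangle configuration of Lemma~\ref{lem:triangulitos} with $h=r+2=3$ (odd) recovers precisely the abelian-subrack argument of \cite[Prop.~2.4]{AF2}. The paper offers no further proof of this remark beyond the citation, so your verification matches its approach.
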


\subsection{Proof of Theorem \ref{th:sm-intro}}

Let $\sigma\in \A_m$; if $\oc_{\sigma}^{ \A_m}$ is of type D,
    then $\oc_{\sigma}^{ \sm}$ is of type D. Then Th. \ref{th:sm-intro}
follows from Th. \ref{th:racks-sn-liquidados} and previous results:
\begin{enumerate}

    \item $(p)$ in $\s_{p}$, $(1,p)$ in $\s_{1+p}$  (with $p$ odd prime),
    $(1^n,3)$ in $\s_{n+3}$, $(3^2)$ in $\s_6$: discarded by \cite[Th. 1]{AZ}.

    \medbreak
    \item $(1,2^2)$ in $\sco$, $(1^2,2^2)$ in $\sei$, $(2^2,3)$ in $\sst$: discarded by  \cite[Th. 1]{AZ}.

    \medbreak
    \item $(2^4)$ in $\soc$: discarded by \cite[Th.~1~(B)~(i)]{AF1}.

    \medbreak
    \item The restrictions on the representations of the remaining classes have been
        explained in \cite{afz}.
\end{enumerate}

\medbreak\subsection{Proof of Theorem \ref{th:an}}

It follows from Theorem \ref{th:racks-sn-liquidados} and the
following considerations:
\begin{enumerate}
    \item $(p)$ in $\A_{p}$, $(1,p)$ in $\A_{1+p}$  (with $p$ odd prime),
    $(1^n,3)$ in $\A_{n+3}$, $(3^2)$ in $\A_6$:
    discarded by \cite[Th. 2.3]{AF2}.

    \medbreak
    \item $(2^2,3)$ in $\A_7$, $(2^4)$ in $\A_8$, $(1^2,2^2)$ in $\A_6$, $(1,2^2)$ in
        $\A_5$: discarded by  Lemma~\ref{lema:a4xcr}~\eqref{prop:2^n2}.
\end{enumerate}

\medbreak\subsection*{Acknowledgements} We have used \cite{GAP} to perform some
computations.  Part of the work of F. F. was done during a visit to the Universidad de
Almer\'ia (supported by Dpto. \'Algebra y An\'alisis Matem\'atico, Univ. de Almer\'ia and the
CONICET); he is grateful to J. Cuadra by his hospitality.  Part of the work of N. A. was
done during a visit to the University of Munich (the travel was supported by the Mathematisches
Institut); he is grateful to H.-J. Schneider and S. Natale by their hospitality.

\end{document}